\def\imod#1{\allowbreak\mkern10mu({\operator@font mod}\,\,#1)}
\theoremstyle{plain}
\newtheorem{thm}{Theorem}
\theoremstyle{definition}
\newtheorem{defi}[thm]{Definition}
\newtheorem*{cor*}{Corollary}
\newtheorem*{lem*}{Lemma}
\newtheorem{lem}[thm]{Lemma}
\newtheorem{prop}[thm]{Proposition}
\newtheorem{clm}[thm]{Claim}
\newtheorem{fct}[thm]{Fact}
\newtheorem{rem}[thm]{Remark}
\DeclareMathOperator{\dom}{dom}
\DeclareMathOperator{\ran}{ran}
\DeclareMathOperator{\supp}{supp}
\DeclareMathOperator{\ZF}{ZF}
\newcommand{\bbP}{\mathbb{P}}
\newcommand{\bbQ}{\mathbb{Q}}
\newcommand{\bbR}{\mathbb{R}}
\newcommand{\bbN}{\mathbb{N}}
\newcommand{\bbS}{\mathbb{S}}
\newcommand{\ned}[1][{}]{\mathrm{ned}_{#1}}
\newcommand{\codeforinvofe}{d}
\newcommand{\diagbij}{\delta}
\renewcommand{\phi}{\varphi}
\newcommand{\Power}{\mathcal{P}}
\newcommand{\forces}{\Vdash}
\renewcommand{\iff}{\leftrightarrow}
\newcommand{\concatB}{\mathbin{\rotatebox[origin=c]{90}{\scalebox{.7}{(\kern1ex)}}}}
\providecommand{\conc}{ \mathbin{{}^\frown}}
\providecommand{\res}{\mathbin{\upharpoonright} }
\DeclareMathOperator{\lh}{lh}
\DeclareMathOperator{\Vee}{\mathbf{V}}
\DeclareMathOperator{\eL}{\mathbf{L}}
\DeclareMathOperator{\parfin}{parfin}
\begin{document}

\title[A Sacks indestructible co-analytic \emph{med} family]{A Sacks indestructible co-analytic maximal eventually different family}

\author{Vera Fischer}
\address{Kurt G\"odel Research Center, University of Vienna, W\"ahringer Stra\ss e 25, 1090 Vienna, Austria}
\email{vera.fischer@univie.ac.at}

\author{David Schrittesser}
\address{Kurt G\"odel Research Center, University of Vienna, W\"ahringer Stra\ss e 25, 1090 Vienna, Austria}
\email{david.schrittesser@univie.ac.at}


\subjclass[2010]{03E17;03E35}
\keywords{eventually different families, Sacks indestructibility, definability}
\date{\today}

\begin{abstract}
In the constructible universe, we construct a co-analytic maximal family of pairwise eventually different functions from $\bbN$ to $\bbN$ which remains maximal after adding arbitrarily many Sacks reals (by a countably supported iteration or product).
\end{abstract}

\maketitle

\section{Introduction}\label{s.intro}

Suppose $\mathfrak X$ is a set and $\mathcal R \subseteq {}^{<\omega}\mathfrak X$ is a relation;
then $\mathcal A \subseteq \mathfrak X$ is called \emph{discrete} (with respect to $\mathcal R$) if and only
if ${}^{<\omega}\mathcal A \cap \mathcal R = \emptyset$, and \emph{maximal discrete} if and only if
in addition, $\mathcal A$ is not a proper subset of an discrete family.

Maximal discrete families always exist by the Axiom of Choice; a \emph{definable} such family may or may not exist. Firstly this depends on the $\mathfrak X$ and $\mathcal R$ in question: 
For example, letting $\mathfrak X = \bbR$, a maximal independent set with respect to linear dependence over $\bbQ$ is usually called a \emph{Hamel basis}; such a set cannot be analytic. 
The same is true for other examples such as \emph{maximal almost disjoint} (short: \emph{mad}) families or \emph{maximal independent families}. 
See \cite{mathias.1967} and \cite[10.28]{miller-infinite}. 

In stark contrast, consider the following binary relation on Baire space ${}^\omega\omega$ (the space of functions from $\bbN$ to $\bbN$ with the product topology): $f$ and $g$ from ${}^\omega\omega$ are \emph{eventually different} if and only if 
\[
(\exists n\in\bbN) ( \forall m \in \bbN) \; m\geq n \Rightarrow f(m) \neq g(m).
\]
A (maximal) discrete set with respect to this relation is called an \emph{(maximal) eventually different family} (short: \emph{ed family} resp.\ \emph{med family}).
Horowitz and Shelah \cite{medf-borel} showed that there is a Borel such family (see also \cite{on-shelah} for a simpler proof); this was improved by the second author of the present article by constructing a closed (in fact $\Pi^0_1$) such family \cite{on-shelah,medf-bounded}.

\medskip

A Borel (or equivalently, analytic) \emph{med} family must always have size $\mathfrak{c}$ (i.e., the size of the continuum, $2^\omega$).
Thus in models where $\mathfrak{c}\geq\aleph_2$, the definability \emph{med} families of cardinality 
strictly smaller than $\mathfrak{c}$ becomes of interest. In fact, the question how simple the definitions of various combinatorial sets of reals can be in models of large continuum has been a central theme in the study of the combinatorial properties of the real line (see \cite{blass} for an introduction and for definitions of the cardinal invariants $\mathfrak{b}$ and $\mathfrak{d}$;  \cite{barto} may also serve as a reference). These include the existence of definable maximal almost disjoint families (\cite{SFLZ, VFSFLZ, JBYK, VFSFYK}), definable maximal families of orthogonal measures (\cite{VFATOM, VFSFATOM, mof, schrittesser-mof-large-continuum}) very recently maximal independent families (\cite{JBVFYK}), and definable maximal cofinitary groups. 

A Borel maximal cofinitary group was constructed by Horowitz and Shelah \cite{borel-mcg}.
The existence of a co-analytic maximal cofinitary group of size $\aleph_1$ in a model of $\mathfrak{b}=\aleph_1<\mathfrak{c}=\mathfrak{d}=\kappa$ is known for arbitrary regular uncountable $\kappa$ (see~\cite{VFDSAT}). The core of the argument is the existence of a co-analytic Cohen indestructible maximal cofinitary group (in the constructible universe $\eL$). While the argument of~\cite{VFDSAT} can be easily modified to produce a co-analytic Cohen indestructible maximal eventually different family, the existence of a co-analytic maximal eventually different family of size $\aleph_1$ in a model of $\mathfrak{d}=\aleph_1<\mathfrak{c}=\kappa$ requires a different approach.  

Such an approach is suggested by a recent analysis of Sacks indestructibility of maximal independent families (see ~\cite{She92},~\cite{VFDM},~\cite{JBVFYK}) showing that in the constructible universe there is a co-analytic Sacks indestructible maximal independent family (see~\cite{JBVFYK}). 
This clearly implies that the existence of a co-analytic maximal independent family of size $\aleph_1$ is consistent with $\mathfrak{d}=\aleph_1<\mathfrak{c}=\kappa$.\footnote{The existence of a Cohen indestructible maximal cofinitary group in $\eL$ with a relatively simple definition stands in strong contrast with the fact that in the Cohen model, there are no projective maximal independent families.}

\medskip

In the present paper, we show that in the constructible universe $\eL$, there is a Sacks indestructible (indestructible under  countable support products and countable support iterations of Sacks forcing) maximal eventually different family which has a $\Pi^1_1$ definition. We thus obtain: 

\begin{thm}\label{t.mainmain} The existence of a co-analytic (in fact lightface $\Pi^1_1$) \emph{med} family of size $\aleph_1$ is consistent with $\mathfrak{d}=\aleph_1<\mathfrak{c}=\kappa$,
where $\kappa$ is an arbitrary cardinal of uncountable cofinality.
\end{thm}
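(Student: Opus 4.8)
The plan is to build, in the constructible universe $\eL$, a maximal eventually different family $\calA = \{f_\al : \al < \om_1\}$ by transfinite recursion of length $\om_1$, using the canonical $\Sigma^1_2$ well-ordering $<_\eL$ of the reals as a bookkeeping device so that membership in $\calA$ can be expressed by a $\Pi^1_1$ formula. At each stage $\al$ I would have enumerated an initial segment of the family together with a countable set of ``tasks'' demanding that a given real $g$, if it is not eventually different from the family constructed so far, be killed as a potential extender of the family; the next function $f_\al$ is chosen (as the $<_\eL$-least real with suitable properties over a countable elementary submodel $M_\al \elsub \eL_{\om_1}$ coding the construction up to $\al$) so as to diagonalize against the $\al$-th such task. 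Maximality of the resulting family is guaranteed because every real $g$ appears as a task at some stage and is thereby met by some $f_\al$ agreeing with it infinitely often; coanalyticity follows from the standard Gödel-style trick of absorbing the recursion into a single $\Pi^1_1$ predicate via condensation, exactly as in the maximal independent family construction of~\cite{JBVFYK} and the maximal cofinitary group construction of~\cite{VFDSAT}.

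The genuinely new ingredient, and the main obstacle, is Sacks indestructibility: I must arrange that $\calA$ remains \emph{maximal} after forcing with a countable support product or iteration of Sacks forcing of arbitrary length. The standard approach is to prove a \emph{preservation theorem} asserting that each $f_\al$ is chosen not merely to diagonalize against reals in the ground model, but against all reals that could be added by a Sacks extension. Concretely, I would fix at stage $\al$ a countable elementary submodel $M_\al$ containing the construction so far, and choose $f_\al$ to be eventually different from every real in $M_\al$ while simultaneously being ``Sacks-generic'' enough that for every Sacks condition $p$ and every $M_\al$-name $\dot g$ for a real, the function $f_\al$ is forced by some stronger condition to agree with $\dot g$ infinitely often whenever $\dot g$ is not eventually different from the family. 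This requires a fusion argument: one builds a fusion sequence of Sacks conditions, at each step refining on a finite front of the perfect tree so as to decide enough values of $\dot g$ to secure an infinite agreement with $f_\al$, and the countable support of the product/iteration ensures that only countably many coordinates are relevant at each stage, so the whole interaction takes place inside $M_\al$.

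The heart of the matter, then, is a combinatorial \emph{coding and diagonalization lemma}: given a countable family of Sacks names for reals that threaten to witness a failure of maximality, one can construct a single eventually different function $f_\al$ together with a fusion-compatible sequence of Sacks conditions forcing that $f_\al$ catches each such name infinitely often. The delicate point is that the perfect-set structure of Sacks forcing interacts badly with the ``eventually different'' relation, since Sacks reals are added by splitting, not by a localization or domination property; one must therefore exploit that a Sacks real can be made to agree with a ground-model-coded function on a prescribed infinite set of levels determined by the splitting nodes of the fusion tree. I expect that the careful management of the fusion so that these agreement levels can be realized uniformly across all countably many names \emph{and} across all iterands simultaneously will be the technically hardest step. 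Once this preservation lemma is in place, Theorem~\ref{t.mainmain} follows: the $\Pi^1_1$ family $\calA$ built in $\eL$ has size $\aleph_1$, remains maximal in the Sacks extension $V^{\bbS_\kappa}$ (where $\bbS_\kappa$ denotes the length-$\kappa$ countable support iteration or product), and in that extension $\mathfrak d = \aleph_1$ (since Sacks forcing is $\om^\om$-bounding and hence does not increase $\mathfrak d$) while $\mathfrak c = \kappa$, yielding the desired model with $\mathfrak d = \aleph_1 < \mathfrak c = \kappa$.
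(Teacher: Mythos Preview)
Your proposal is correct and takes essentially the same approach as the paper: build the family in $\eL$ by a $<_{\eL}$-guided recursion, with each new function chosen via a fusion argument so as to catch every potential Sacks-generic extender, and then read off the model from the Sacks extension. The paper's implementation differs only in two minor respects: the key catching lemma (Lemma~\ref{l.main.product}) is phrased topologically---for \emph{every} $\bar x$ in a product of perfect sets, not just for generic ones---which, combined with continuous reading of names (Lemma~\ref{l.continuous.reading}), handles iterations and products of arbitrary length uniformly without working with names at each stage; and the $\Pi^1_1$ complexity is not encoded directly into the recursion but obtained afterwards from a $\Sigma^1_2$ definition via a general conversion theorem (Theorem~\ref{t.sigma-1-2-implies-pi-1-1}).
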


Crucial to our proof of this theorem is the following lemma, which may be of independent interest in descriptive set theory. We will prove this in Section~\ref{s.sacks} in slightly different form, namely as Lemma~\ref{l.main.product}.
\begin{lem}\label{l.dst}
Let $\mathcal F_0$ be a countable set of functions from $\bbN$ to $\bbN$ and suppose $f^* \colon {}^\bbN ({}^{\bbN}2) \to {}^\bbN\bbN$ is a continuous function
 such that
\[
\big(\forall \bar x \in {}^\bbN ({}^{\bbN}2)\big)\big(\forall f\in\mathcal F_0\big) \; f^*(\bar x)\text{ is eventually different from }f
\]
Then there is  $h \in {}^\bbN\bbN$ and a sequence $\langle P_n \mid n\in\omega\rangle$ of perfect subsets of ${}^\bbN 2$ such that
\begin{equation*}
(\forall \bar x \in \prod_{n\in\omega} P_n) \; h\text{ is not eventually different from }f^*(\bar x)
\end{equation*}
but 
$h$ is eventually different from every function in $\mathcal{F}_0$.
\end{lem}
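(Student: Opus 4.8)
The plan is to build the real $h$ and the perfect trees $T_n$ (with $P_n=[T_n]$) simultaneously by a fusion argument in the countable support product of Sacks forcing, using continuity of $f^*$ to "read" its values and the eventual-difference hypothesis to keep $h$ away from $\mathcal F_0$. Fix an enumeration $\mathcal F_0=\{f_k\mid k\in\om\}$. I would construct a fusion sequence $\bar T^0\geq_0\bar T^1\geq_1\cdots$, where each $\bar T^s=\langle T^s_n\mid n\in\om\rangle$ is a condition (each $T^s_n$ a perfect subtree of $2^{<\om}$) and $\bar T^{s+1}\leq_s\bar T^s$ means that the first $s$ splitting levels of $T^s_n$ are preserved for every $n\leq s$. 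The limit $T^\infty_n=\bigcap_s T^s_n$ is then perfect and I set $P_n=[T^\infty_n]$. At stage $s$ I commit finitely many new values of $h$ so that every branch of the final product is "caught" by $h$ once more, with the commitment levels tending to infinity.

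At stage $s$ call a \emph{cell} a tuple $\sigma=\langle\sigma_n\mid n\leq s\rangle$ where $\sigma_n$ is one of the finitely many $s$-th splitting nodes of $T^s_n$. The one genuinely product-specific difficulty is that two cells may share a coordinate (i.e. $\sigma_n=\sigma'_n$), so the factor trees cannot be refined independently as for a single Sacks real. I resolve this by fixing, once and for all at stage $s$ and for each coordinate $n$ and each $s$-th splitting node $\sigma_n$, a single branch $p_{n,\sigma_n}\in[(T^s_n)_{\sigma_n}]$ through the subtree above $\sigma_n$ (and one branch $p_n\in[T^s_n]$ for the coordinates $n>s$, which carry no committed splitting yet). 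For a cell $\sigma$ let $\bar x_\sigma$ be the branch of $\prod_n[T^s_n]$ whose $n$-th coordinate follows $p_{n,\sigma_n}$, resp. $p_n$; cells sharing $\sigma_n$ now share the \emph{same} coordinate-$n$ branch, which is what makes the refinements below mutually consistent. Since $\bar x_\sigma\in{}^\bbN({}^\bbN 2)$, the hypothesis gives that $f^*(\bar x_\sigma)$ is eventually different from each $f_k$; let $N_\sigma$ bound the thresholds for $k\leq s$. Processing the finitely many cells in turn I pick pairwise distinct hit-levels $m_\sigma$, each larger than $N_\sigma$, than every previously chosen hit-level, and than the heights of all committed splitting nodes, and I set $v_\sigma=f^*(\bar x_\sigma)(m_\sigma)$, so automatically $v_\sigma\neq f_k(m_\sigma)$ for all $k\leq s$. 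By continuity there is a level $l$ (the maximum over the finitely many cells of the moduli of $\bar x\mapsto f^*(\bar x)(m_\sigma)$) such that $f^*(\bar x)(m_\sigma)$ depends only on $\bar x\restr l$. I obtain $\bar T^{s+1}\leq_s\bar T^s$ by replacing, in each coordinate $n<l$ above each node $\sigma_n$ (resp.\ above the stem when $n>s$), the subtree by a perfect subtree that follows $p_{n,\sigma_n}$ (resp.\ $p_n$) up to level $l$ and splits thereafter. This keeps the first $s$ splitting levels intact, forces $\bar x\restr l$ to be constant on each refined cell, hence makes $f^*(\cdot)(m_\sigma)$ constantly $v_\sigma$ on the refined cell of $\sigma$, with $\bar x_\sigma$ still in it. I set $h(m_\sigma)=v_\sigma$.

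After the fusion I define $h$ on the cofinitely many levels $m$ never used as hit-levels by letting $h(m)$ avoid $\{f_k(m)\mid k\leq c(m)\}$ for some $c(m)\to\infty$. For catching: any $\bar x\in\prod_n P_n$ passes, at each stage $s$, through a unique cell $\sigma$ (the first $s$ splitting levels are frozen from stage $s$ on), and the refinement of that cell survives into the final trees, so $f^*(\bar x)(m_\sigma)=v_\sigma=h(m_\sigma)$; as $m_\sigma\to\infty$ this gives $h(m)=f^*(\bar x)(m)$ for infinitely many $m$, i.e.\ $h$ is not eventually different from $f^*(\bar x)$. For eventual difference from $\mathcal F_0$: fix $k$; every hit-level $m_\sigma$ from a stage $s\geq k$ has $v_\sigma\neq f_k(m_\sigma)$, the stages $s<k$ contribute only finitely many hit-levels, and at non-hit levels $m$ with $c(m)\geq k$ we arranged $h(m)\neq f_k(m)$, so $h(m)\neq f_k(m)$ for all large $m$.

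The crux — and the only place where the product, rather than a single Sacks real, costs anything — is exactly the consistency of the refinements across cells sharing a coordinate; the shared-branch choice $p_{n,\sigma_n}$ is precisely what makes the simultaneous continuous reading of $f^*$ at all the finitely many hit-levels compatible with preserving the committed splitting structure. A secondary point to get right is the order of choices (fix the full representative branches $\bar x_\sigma$ first, then the thresholds $N_\sigma$, then the hit-levels $m_\sigma$, and only afterwards the common modulus $l$ and the refinement), which dissolves the apparent circularity between "$m_\sigma$ large enough to beat the eventual-difference threshold" and "$l$ large enough to decide $f^*(\cdot)(m_\sigma)$". Convergence of the fusion and perfectness of the $T^\infty_n$ are then routine.
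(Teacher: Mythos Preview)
Your proposal is correct and follows essentially the same fusion argument as the paper's proof (given there as Lemma~\ref{l.main.product}). The only organizational difference is that where you fix shared representative branches $p_{n,\sigma_n}$ in advance to make the per-coordinate refinements compatible across cells with a common coordinate, the paper instead processes the cells sequentially in lexicographic order---refining the condition after each one---so that later cells are simply handled inside the already-shrunk tree.
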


Closely related is the following property of a forcing $\bbP$ (we shall refer to this as ``property $\ned$'', see Definition~\ref{d.ned}): namely, that for every countable ground model set $\mathcal F_0 \subseteq {}^\bbN\bbN$ and every $f\colon\bbN \to \bbN$ added by $\bbP$ which is eventually different from every function in $\mathcal F_0$ there is a function $h$ in the ground model which is not eventually different from $f$ but which is eventually different from every function in $\mathcal F_0$.
We show that property $\ned$ is equivalent to ``not adding an eventually different real'' for ${}^\omega\omega$-bounding forcing (see Proposition~\ref{prop.ned}), which is an iterable property for sufficiently definable such forcings (Proposition~\ref{prop.general-case}).
We also show arbitrary countably supported products of Sacks forcing have this property (Proposition~\ref{prop.general-case}).

\medskip

In a forthcoming paper, we will extend our main result to the existence of a Sacks indestructible co-analytic maximal cofinitary group in $\eL$, thus obtaining the existence of a co-analytic maximal cofinitary group of size $\aleph_1$ in a model of $\mathfrak{d}=\aleph_1<\mathfrak{c}=\kappa$ for an $\kappa$ arbitrary regular uncountable cardinal. Furthermore since uncountable co-analytic sets must have size either $\omega_1$ or $2^\omega$, to obtain other sizes, one must go beyond the co-analytic realm. For a (forcing) construction of a model in which there is a $\Pi^1_2$ \emph{med} family (and in fact a $\Pi^1_2$ maximal cofinitary group) of size $\omega_2$ while $\mathfrak{c}=\aleph_3$ see~\cite{VFSFDSAT}.

\medskip

The paper is structured as follows.
In Section~\ref{s.prelim} we collect some notation and a few basic facts that will be needed throughout.

In Section~\ref{s.sacks} we prove some results about Sacks forcing:
In Lemma~\ref{l.continuous.reading}, we provide a particularly strong version of what is known as \emph{continuous reading of names} which will be convenient for our main result.
We then remind the reader of the proof that Sacks forcing does not add eventually different reals (Lemma~\ref{l.sacks-ev-diff}).
We then prove Lemma~\ref{l.dst} in slightly different form (Lemma~\ref{l.main.product}), i.e., that property $\ned$ holds ``in an absolute manner'' for countable products of Sacks forcing; this together with our particular form of continuous reading of names will be  crucial in the proof of our main result, Theorem~\ref{t.main}.

In Section~\ref{s.main} we show that any $\Sigma^1_2$ \emph{med} family gives rise to a $\Pi^1_1$ such family which is indestructible by the same forcings (Theorem~\ref{t.sigma-1-2-implies-pi-1-1}) and then show how to construct a $\Sigma^1_2$ \emph{med} family which is indestructible by Sacks forcing in $\eL$ (Theorem~\ref{t.main}).

Finally Section~\ref{s.property.ned} we explore property ned (see Definition~\ref{d.ned}) in greater generality, discussing its relation to the property of not adding an eventually different real, its iterability, and show that countably supported iterations and products of Sacks forcing have property ned.

We close with some open questions in Section~\ref{s.open}. 

\medskip

\textit{Acknowledgments:} The authors thank the Austrian Science Fund (FWF) for generous support through grants numbers Y1012-N35 (Vera Fischer) and F29999 (David Schrittesser).
We thank J\"org Brendle for calling to our attention the problem of constructing a co-analytic Sacks indestructible \emph{med} family in $\eL$ during the RIMS meeting in fall 2016.

\section{Notation and preliminaries}\label{s.prelim}

We use both $\omega$ and $\bbN$ for the set of natural numbers. 
We write ${}^A B$ for the set of functions from $A$ to $B$. For an ordinal $\alpha$, 
${}^{<\alpha} B$ denotes $\bigcup_{\xi<\alpha} {}^\xi B$ (the set of sequences from $B$ of length $<\alpha$), and ${}^{\leq\alpha} B$ is just ${}^{<\alpha+1} B$. 
For $s \in {}^{<\alpha} B$ we write $\lh(s)$ for the length of $s$, that is, for $\dom(s)$.
We denote by $\parfin(A,B)$ the set of finite partial functions from $A$ to $B$.

If $S$ is a set equipped with a strict ordering $<_{S}$, ${}^{\leq\omega} S$ is ordered by the \emph{maximolexicographic ordering}, denoted here by $\mathbin{<^{\text{m}}_{\text{lex}}}$ and defined as follows:
Given $s, s' \in {}^{\leq\omega} S$, $s \mathbin{<^{\text{m}}_{\text{lex}}} s'$ if
$\lh(s) < \lh(s')$ or $\lh(s)=\lh(s')$ and for the smallest $k < \lh(s)$ such that $s(k) \neq s'(k)$, it holds that
$s(k) \mathbin{<_S} s'(k)$. 
Here, $<_S$ will itself be the lexicographic order again, or the natural order on $\bbN$. We will not be explicit about which order $<_S$ is meant since this will be clear from the context.

\medskip

Following \cite[2.6]{kechris1995}, we say $f \colon {}^{<\omega} 2 \to {}^{<\omega}\omega$ is \emph{monotone}
if whenever $s,s' \in {}^{<\omega}2$ and $s \subseteq s'$, then $f(s) \subseteq f(s')$.
We say such $f$ is \emph{proper}  if
for any $x \in {}^{\omega} 2$,
\[
\lh(f(x\res n)) \stackrel{n\to\infty}{\longrightarrow} \infty
\]
Given such $f$, we define a continuous function $f^*\colon {}^{\omega}2\to{}^{\omega}\omega$ by
\[
f^*(x) = \bigcup_{n\in\omega} f(x\res n)
\]
We also say $f$ \emph{is a code} for $f^*$.

Any continuous function $h\colon {}^{\omega}2 \to {}^{\omega}\omega$---or more generally, between effective Polish spaces---arises from a code, and being a code for a continuous function (i.e., being monotone and proper) is absolute between well-founded models of (a small fragment of) set theory.
Analogously, we will talk about
monotone and proper functions
\[
f\colon {}^{<\omega} ({}^{<\omega} 2) \to {}^{<\omega}\omega.
\]
in which case $f^*$ is a total function ${}^\omega({}^\omega 2) \to {}^\omega\omega$.
All the above holds, the necessary changes having been made; we leave details to the reader.

\medskip

Recall that Sacks forcing $\bbS$ is the partial order consisting of perfect subtrees of ${}^{<\omega} 2$, ordered by reverse inclusion. Given $p \in \bbS$, a \emph{splitting node in $p$} is a node $t\in p$ such that $\{t \conc 0, t\conc 1\} \subseteq p$, and $p_n$ denotes the set of $n$th splitting nodes, that is, the set of splitting nodes $t$ in $p$ such that among the proper initial segments of $t$ there are exactly $n$ splitting nodes; $p_{\leq n}$ denotes $\bigcup_{k\leq n} p_k$.
Given $t\in p$, $p_t=\{s\in p\mid s\subseteq t\vee t\subseteq s\}$.
We write
$[p] = \{x\in{}^\omega 2\mid(\forall n\in\omega)\; x\res n\in p\}$ 
for the set of branches of $p$.
When $\bbP $ is a product of Sacks forcing and $\tilde p \in \bbP$, we shall use the same notation and write
\[
[\tilde p]=\big\{\bar x\in {}^{\supp(\tilde p)}({}^\omega 2) \mid \big(\forall \xi\in\supp(\tilde p)\big)\; \bar x(\xi)\in [\tilde p(\xi)]\big\}.
\]
Below in Definition~\ref{d.main}\eqref{d.iterated-branch} we will introduce similar a notation for iterated Sacks conditions.

Given an iteration $\bbP$ of Sacks forcing of length $\lambda$, a condition $\bar p\in \bbP$ and $\bar t \in \parfin(\supp(\bar p),{}^{<\omega} 2)$,
we define by induction a sequence $\bar p_{\bar t}$ of length $\lambda$ as follows: Clearly, $\bar p_{\bar t} \res \emptyset$ is the empty sequence.
Given $\sigma \in \lambda $, suppose $\bar p_{\bar t} \res \sigma$ is already defined. 
If $\sigma \in\dom(\bar t)$ let $\bar p_{\bar t}(\sigma)$ be a
$\bbP \res \sigma$-name such that
\[
\forces_{\bbP\res\sigma} \bar p_{\bar t}(\sigma)=\bar p(\sigma)_{\bar t(\sigma)}.
\]
Otherwise, let $\bar p_{\bar t}(\sigma) = \bar p(\sigma)$.

Given $\bar p\in \bbP$ and $\bar t \in \parfin(\supp(\bar p),{}^{<\omega} 2)$, we say \emph{$\bar p$ accepts $\bar t$} 
to mean that $\bar p_{\bar t} \in \bbP$, i.e., the above definition yields an iterated Sacks conditions---equivalently, for each $\sigma \in \dom(\bar t)$,
\[
\bar p_{\bar t} \res\sigma\forces_{\bbP \res\sigma} \bar t(\sigma) \in \bar p(\sigma).
\]
When $\bbP$ is a product of Sacks forcing and $\tilde p \in \bbP$, $\tilde p_{\bar t}$ and the relation ``accepts'' is defined analogously.
We usually denote by $\bar s_{\dot G}$ a name for the generic sequence of Sacks reals in a product or iteration of Sacks forcing.

\medskip

Recall that  a forcing $\bbP$ is \emph{${}^\omega\omega$-bounding} if any only if
\begin{equation*}
\forces_{\bbP} (\forall f\in{}^\omega\omega)(\exists h\in{}^\omega\omega\cap V)(\forall n\in\omega)\;f(n) \leq h(n).
\end{equation*}
We say $\bbP$ \emph{does not add an eventually different real} to mean
\begin{equation*}
\forces_{\bbP} (\forall f\in{}^\omega\omega)(\exists h\in{}^\omega\omega\cap V)\;\text{$f$ is not eventually different from $h$.}
\end{equation*}
We sometimes---but not always---decorate names in the forcing language with checks and dots, with the goal of helping the reader. 
We refer to \cite{jech-set,barto,kechris1995,moschovakis,kanamori} for further notation and notions left undefined here.

\section{Iterated Sacks forcing and eventually different reals}\label{s.sacks}

It will be convenient for our main result to have a particular form of ``continuous reading of names'' for iterated Sacks forcing which is more easily stated after we introduce some terminology (some of which is taken from~\cite{schrittesser-mof-large-continuum}).
\begin{defi}\label{d.main} Let $\bbP$ be a countable support iteration of Sacks forcing of any length and suppose $\bar p, \bar q \in \bbP$.
\begin{enumerate}[label=(\alph*), ref=\alph*]
\item Given $p\in\bbS$, let $p^*_n$ denote the set of immediate successors of $n$th splitting nodes, i.e., $p^*_n=\{s\in p\mid s \res (\lh (p)-1) \in p_n\}$.
\item A \emph{standard enumeration of $\supp(\bar p)$} is a sequence
\[
\Sigma = \langle \sigma_l \mid  l < \alpha\rangle
\]
such that $\alpha \leq \omega$,
$\{ \sigma_l \mid  l < \alpha \} = \{0\}\cup\supp(\bar p)$, and $\sigma_0 = 0$.

\item
Given $n \in\omega$, a finite subset $S$ of $\{0\}\cup\supp(\bar p)$, and $n \in \omega$ we write
$\bar q\leq^{S}_n \bar p$ exactly if $\bar q\leq \bar p$ and for every $\sigma\in S$,
\[
\bar q \res \sigma \forces \bar q(\sigma)_n = \bar p(\sigma)_n.
\]
If $S=\{\sigma_0, \hdots, \sigma_k\}$ and $\Sigma$ is a sequence whose first $k+1$ values are $\sigma_0, \hdots, \sigma_k$, we also write $\leq^{\sigma_0, \hdots, \sigma_k}_n$ or $\leq^{\Sigma\res k+1}_n$  
for $\leq^{S}_n$. Note that $\leq^\emptyset_n$ is just $\leq$.

\item\label{d.iterated-branch}
Let $[\bar p]$ be some name such that
\[
\forces_{\bbP} [\bar p] = \big\{\bar x\in {}^{\{0\}\cup\supp(\bar p)}({}^{\omega} 2) \mid \big(\forall \sigma \in\{0\}\cup\supp(\bar p)\big) \; \bar x(\sigma) \in\bar [p(\sigma)]\big\}.
\]
\item Let $\Sigma=\langle \sigma_k \mid k\in \alpha\rangle$ be standard enumeration of $\supp(\bar q)$.
For each $k\in\alpha$ let $\dot e^{\bar q, \Sigma}_k$ be a $\bbP\res\sigma_k$-name such that
\[
\forces_{\bbP\res \sigma_{k}} \dot e^{\bar q, \Sigma}_k \colon  [q(\sigma_{k})] \to {}^{\omega}2\text{ preserves the strict lexicographic ordering and is onto.}
\]
Note that $\bbP \res \sigma_0 = \{\emptyset\}$, i.e., the trivial forcing and we treat $\dot e^{\bar q, \Sigma}_0$ as a name only to simplify notation.

Moreover, let $\dot e^{\bar q, \Sigma}$ be a name for the map $[\bar q] \to {}^\alpha({}^\omega 2)$ such that
\[
\forces_{\bbP}  \dot e^{\bar q, \Sigma}(\bar x)= \langle \dot e^{\bar q, \Sigma}_k( \bar x (\sigma_k)) \mid k \in \alpha \rangle.
\]
We drop the superscript $\Sigma$ as well as $\bar q$ when they can be inferred from the context.
\end{enumerate}
We shall use the same notation for products; most of the definitions above then simplify slightly. Details are left to the reader.
\end{defi}
The next lemma is a version of continuous reading of names for iterations and products of Sacks forcing.
\begin{lem}\label{l.continuous.reading}
Let $\bbP$ be the countable support iteration or product of Sacks forcing of length $\lambda$.
Suppose $\bar p\in\bbP$ and $\dot f$ is a $\bbP$-name such that $\bar p\forces \dot f \in {}^\omega\omega$.
Then we can find $\bar q\in\bbP$ together with a standard enumeration $\Sigma=\langle \sigma_k \mid k \in \alpha\rangle$ of $\supp(\bar q)$  and a code $f\colon {}^{<\alpha}({}^{<\omega}2)\to{}^{<\omega}\omega$ for a continuous map
\[
f^* \colon {}^{\alpha}({}^{\omega} 2) \to {}^\omega\omega
\]
such that (dropping superscripts from $\dot e^{\bar q, \Sigma}$ and letting $\bar s_{\dot G}$ be a name for  the generic sequence of Sacks reals)
\[
\bar q \forces_{\bbP} \dot f = (f^* \circ \dot e)(\bar s_{\dot G} \res \supp(\bar q)).
\]
\end{lem}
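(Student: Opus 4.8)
The plan is to prove this by a fusion argument, the standard tool for countable support iterations and products of Sacks forcing (in the style of Baumgartner--Laver), refined to yield the strong ``straightened'' form $f^*\circ\dot e$. Throughout I fix a standard enumeration $\Sigma=\langle\sigma_k\mid k<\alpha\rangle$ of $\{0\}\cup\supp(\bar p)$ and a bookkeeping surjection $b\colon\omega\to\omega\times\omega$ hitting every pair (coordinate index, task) cofinally often; since shrinking a Sacks condition never empties a coordinate we may take $\supp(\bar q)=\supp(\bar p)$ and reuse $\Sigma$.

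First I would build a fusion sequence $\bar p=\bar p^0\geq\bar p^1\geq\cdots$ together with strictly increasing parameters so that $\bar p^{i+1}\leq^{F_i}_{n_i}\bar p^i$, where $F_i=\{\sigma_0,\dots,\sigma_i\}\cap(\{0\}\cup\supp(\bar p))$ and $\langle n_i\rangle$ is chosen so that each coordinate is eventually in $F_i$ and the frozen splitting levels tend to $\infty$ at every coordinate; this is exactly the hypothesis of the fusion lemma and guarantees that the coordinatewise intersection $\bar q$, with $\bar q(\sigma)=\bigcap_i\bar p^i(\sigma)$, is again a condition in $\bbP$ below every $\bar p^i$. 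At stage $i$ I do two things for the finite set $F_i$ at the current splitting levels. (1) \emph{Reading the iterands from below:} I shrink so that for each $\sigma_k\in F_i$ the finite tree $\bar q(\sigma_k)$ up to its $n_i$-th splitting level is decided by the configuration of splitting nodes chosen at the coordinates below $\sigma_k$; this is the usual continuous reading of the names for the iterands and is what gives splitting-node ``addresses'' a ground-model meaning. (2) \emph{Deciding $\dot f$:} for each configuration $\bar t$ assigning to every $\sigma_k\in F_i$ an immediate successor of an $n_i$-th splitting node of $\bar p^i(\sigma_k)$ (i.e.\ a node in $\bar p^i(\sigma_k)^*_{n_i}$), I pass below $\bar p^i_{\bar t}$ and shrink it to decide a longer initial segment of $\dot f$, then amalgamate the finitely many pieces without disturbing the frozen structure. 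This is where the relation ``accepts'' and the notation $\bar p_{\bar t}$ of Definition~\ref{d.main} are used.

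Next, with $\bar q$ the fusion lower bound, I read off the code. The order-preserving onto maps $\dot e_k\colon[\bar q(\sigma_k)]\to{}^\omega2$ translate ``the $n$-th splitting decision of a branch'' into ``its $n$-th binary digit'': $\dot e_k(x)\res n$ names the immediate successor of the $n$-th splitting node of $\bar q(\sigma_k)$ lying below $x$. Hence a finite tuple $\bar u\in{}^{<\alpha}({}^{<\omega}2)$ of binary addresses determines via these maps a configuration $\dot t(\bar u)$, and by step (1) this configuration has a ground-model meaning once the addresses at earlier coordinates are fixed. I then set $f(\bar u)$ to be the initial segment of $\dot f$ that $\bar q_{\dot t(\bar u)}$ was forced to decide at the corresponding stage. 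Monotonicity (longer addresses give longer decided segments) and properness (the decided length tends to $\infty$, since the frozen levels do) follow from the construction, so $f$ is a genuine code for a continuous $f^*\colon{}^\alpha({}^\omega2)\to{}^\omega\omega$. Finally I verify $\bar q\forces\dot f=(f^*\circ\dot e)(\bar s_{\dot G}\res\supp(\bar q))$: for a generic $\bar G$ below $\bar q$ the straightened reals $\dot e(\bar s_{\dot G}\res\supp(\bar q))=\langle y_k\rangle$ encode the splitting decisions of the generic branches, each finite restriction $\langle y_k\res n_k\rangle$ picks out the configuration through which the generic passes, and the value $f$ assigned to that configuration agrees with $\dot f$ by step (2); taking the union over increasing finite restrictions yields the equality.

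The hard part will be the iteration case, where each $\bar q(\sigma_k)$ and each $\dot e_k$ is only a name depending on the coordinates below $\sigma_k$, so a priori the value decided below a configuration could depend on the generic rather than on the binary address alone. This is precisely why step (1), continuously reading each iterand tree from the coordinates below it, is essential: it makes ``the immediate successor of the $n$-th splitting node with a given binary address'' a ground-model object as a function of the lower addresses, so that the values $f(\bar u)$ are genuine ground-model values and $f^*$ is a ground-model continuous function. One must also check that the amalgamation in step (2) respects $\leq^{F_i}_{n_i}$ so that the fusion lower bound exists. For countable support \emph{products} both difficulties vanish, since each $\bar q(\sigma_k)$ is already a ground-model tree and $\dot e_k$ a ground-model homeomorphism, and the argument reduces to classical continuous reading of names composed with the fixed maps $\dot e_k^{-1}$.
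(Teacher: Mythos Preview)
Your overall architecture matches the paper's: build a fusion sequence, at each stage simultaneously stabilise the splitting structure of the relevant iterands (your step~(1), the paper's Requirements~(A)--(D)) and decide longer initial segments of $\dot f$ cone-by-cone (your step~(2), the paper's Requirement~(E)), then read off the code $f$ from these decisions and identify its argument with the image of the generic under $\dot e$. The verification at the end is also the same.

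There is, however, a genuine gap in your first paragraph. You assert that ``since shrinking a Sacks condition never empties a coordinate we may take $\supp(\bar q)=\supp(\bar p)$ and reuse $\Sigma$.'' This is false when $\lambda$ is uncountable. In step~(2) you must pass below $\bar p^i_{\bar t}$ to a condition deciding more of $\dot f$; that stronger condition will in general have strictly larger support than $\bar p^i$, and after amalgamating, so will $\bar p^{i+1}$. The support therefore grows along the fusion sequence, and your fixed enumeration $\Sigma$ of $\{0\}\cup\supp(\bar p)$ does not enumerate $\supp(\bar q)$. The paper handles exactly this: at stage $n$ it fixes an enumeration $\Sigma^n$ of $\supp(\bar p^n)\setminus\supp(\bar p^{n-1})$ and then splices all the $\Sigma^n$ together into a single standard enumeration $\Sigma$ of $\supp(\bar q)$ via a diagonal bijection $\diagbij$, making sure (Equation~(1) in the paper) that by stage $n$ the first $n$ elements of $\Sigma$ have already been determined so that the fusion relation $\leq^{\Sigma\res n}_{n-1}$ makes sense. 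Your bookkeeping surjection $b$ is the right idea, but it has to act on a set that is being revealed during the construction, not on $\supp(\bar p)$ alone. Once you insert this bookkeeping, your argument goes through essentially as written; for countable $\lambda$ (and for products) your simplification is fine, as the paper itself notes.
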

\begin{proof}
The proof for products is a straightforward simplification of the proof for iterations, so let us suppose $\bbP$ is an iteration as in the lemma. 
Let $\bar p \in \bbP$ and let $\dot f$ be a $\bbP$-name such that $\bar p\forces \dot f\in {}^\omega\omega$.
To obtain $f$ and $\bar q$, we will build a fusion sequence $\bar p^0 \geq \bar p^1 \geq \bar p^2 \hdots$ with $\bar p^0=\bar p$ and whose greatest lower bound will be $\bar q$, as follows:
Fix a standard enumeration $\Sigma^0=\langle \sigma^0_k \mid  k\in\alpha_0\rangle$ of $\supp(\bar p^0)$.
For every further step $n >0$ in the construction of the fusion sequence, after having obtained $\bar p^n$  we shall also fix an enumeration $\Sigma^n=\langle \sigma^n_k \mid  k\in\alpha_n\rangle$ of $\supp(\bar p^{n})\setminus \supp(\bar p^{n-1})$ . Note that the choice of enumeration at each step is essentially arbitrary; but let us assume $0\in\supp(\bar p^0)$ to avoid trivialities.
Also note that it is entirely possible that for some $n>0$, $\Sigma^n$ is the empty sequence.\footnote{\label{footnote}Of course, we could distinguish cases as follows: If $\lambda < \omega_1$ we may assume $\supp(p^0) = \lambda$ and $\alpha_n=\emptyset$ for $n>0$; if $\lambda \geq \omega_1$ we may assume $\alpha_n = \omega$ for each $n\in\omega$.}

At the end of the construction we obtain a standard enumeration $\Sigma=\langle\sigma_m\mid m<\alpha\rangle$ of $\supp(\bar q)$ as follows:
Let 
\[
\diagbij \colon \bigcup_{n\in\omega} \big(\{n\} \times \alpha_n\big) \to \omega
\] 
be any bijective function such that $\diagbij(0)=(0,0)$ and for each $m\in\omega$,
\begin{equation}\label{e.d}
\diagbij^{-1}[\{0, \hdots, m\}]\subseteq \bigcup_{n\leq m} \big(\{n\} \times \alpha_n\big)
\end{equation}
Then we shall let $\Sigma = \langle \sigma_m \mid  m\in\alpha\rangle$ be defined by
\[
\sigma_{\diagbij(n,k)} = \sigma^n_k.
\]
For concreteness, assume that for each $n\in\omega$, $\alpha_n = \omega$. Then we can let
\[
\diagbij(n,k)= \frac{(n+k+1)(n+k)}{2}+n.
\]
In other words, $\Sigma$ will enumerate $\{ \sigma^n_k \mid  n,k\in \omega \}$ by the well-known \emph{diagonal counting procedure}:

\[
\xymatrix{
   {\sigma^0_0} \ar@{->}[r]&  {\sigma^0_1} \ar@{->}[ld]  & {\sigma^0_2} \ar@{->}[dl]& {\sigma^0_3} \ar@{->}[dl] &\hdots \\
  \sigma^1_0 \ar@{->}[rru]  &  \sigma^1_1\ar@{->}[dl]  & {\sigma^1_2} \ar@{->}[dl] & \hdots  &\\
  \sigma^2_0 \ar@{->}[rrruu] &  \sigma^2_1 \ar@{->}[dl] &\hdots &  &\\
  \hdots&&&&\\
 }
\]

\medskip

In case some (or all) $\alpha_n$ are finite, we leave it to the reader to vary this construction to find an appropriate map $\diagbij$---but note from the footnote on p.~\pageref{footnote} that in this case, one can simply assume $\Sigma=\Sigma^0$ and that $\supp(\bar p^n)  = \lambda$ for each $n\in\omega$.

\medskip

Our construction will be set up so as to guarantee that
\[
\bar p^0 \geq^{\sigma_0}_0 \bar p^1 \geq^{\Sigma\res 2}_1 \geq \hdots \bar p^{n-1} \geq^{\Sigma\res n}_{n-1} \bar p^{n}\hdots
\]
At the same time we shall define $f$ and auxiliary functions which approximate the inverse of $\dot e^{\bar q,\Sigma}$: For each $n>0$ and $k$ such that $0<k\leq n\cap \alpha$, we define
\[
d^k_n\colon {}^{k}({}^n 2) \to {}^{\{\sigma_l \mid l< k\}}({}^{<\omega}2)
\]
so that the following hold:
\begin{enumerate}[label=(\Alph*), ref=(\Alph*)]
\item\label{r.aux-first}
For each $\bar t \in {}^{k}({}^n 2)$, $\bar p^n$ accepts $d^k_n(\bar t)$, that is, $\bar p^n_{d^k_n(\bar t)}$ is an iterated Sacks condition (see Section~\ref{s.prelim} for the meaning of ``accepts''). 

\item\label{r.d-1-1}
The map defined for $t \in {}^n 2$ by
\[
t \mapsto d^{1}_n(\langle t\rangle)(\sigma_{0}),
\]
where $\langle t\rangle$ denotes the sequence of length $1$ whose only element is $t$,
is the unique bijection
\[
{}^n2 \to \bar p^n(0)^*_{n-1}
\]
which preserves the lexicographic ordering.

\item\label{r.d-n-k-coheres} 
If $k'$ is such that $k < k' \leq n\cap \alpha$, the map $d^{k'}_n$ extends $d^k_n$ in the sense that
for any $\bar t \in {}^{k'}({}^n 2)$,
\[
d^{k'}_n(\bar t) \res \sigma_k = d^k_n(\bar t \res k).
\]

\item\label{r.e-k-n}\label{r.aux-last} 
When $\bar t \in  {}^{k}({}^n 2)$,
the condition $(\bar p^n\res\sigma_{k})_{d^{k}_n(\bar t)}$ forces that in the extension by $\bbP \res \sigma_{k}$,
the map defined for $t \in {}^n 2$ by
\[
t \mapsto d^{k+1}_n(\bar t \conc t)(\sigma_{k})
\]
is the unique bijection
\[
{}^n2 \to \bar p^n(\sigma_{k})^*_{n-1}
\]
which preserves the lexicographic ordering on ${}^{<\omega} 2$.

\item\label{r.f} For each $\bar t\in {}^{n\cap\alpha}({}^n 2)$, 
$
(\bar p^n)_{d^n_n(\bar t)} \forces_{\bbP} \dot f \res n = f(\bar t)$.
\end{enumerate}

\medskip

 \begin{rem}~\label{r.remark}
 \begin{enumerate}
 \item Note that by Requirement~\ref{r.e-k-n} in particular $(\bar p^n \res \sigma_k)_{d^k_n(\bar t)}$ decides $\bar p(\sigma_k)^*_{n-1}$, and in fact by induction decides $\bar p(\sigma_k)_{\leq n-1}$. 
 \item The reader should recognize that due to the fact that the $\Sigma$ cannot in general enumerate the support in its natural order, often $d^k_n(\bar t)$ will not depend on some of the components of its input $\bar t$---precisely, it only depends on those $\bar t(j)$ where both $j < k$ and $\sigma_j < \sigma_k$. Nevertheless, it is notationally much easier to allow `dummy input' in the other components.
 \item Since ${}^{0}({}^n 2) = \{ \emptyset \}$, i.e., contains only the empty sequence, \ref{r.d-1-1} can be interpreted as the case  of \ref{r.e-k-n} where we allow $k=0$ (where necessarily $\bar t = \emptyset$, and $\bbP\res\sigma_0$ is the trivial forcing).
 \item By these requirements, in particular Requirement \ref{r.f}, clearly $f$ will be monotone and proper and thus code a continuous function
 $f^*\colon {}^{<\alpha}({}^\omega 2)\to {}^\omega\omega$.

\end{enumerate}
\end{rem}

\medskip
 We now give the details of the construction.
 The definition of $d^k_n$ is by induction on $n$, with several finite sub-inductions, one of them on $k\leq n$, at each step. Simultaneously, we define $f$.

 Let $\bar p^0 \leq \bar p$ be any condition such that $0\in\supp(\bar p^0)$ to avoid trivialities. 
 We first define $f$ restricted to ${}^0({}^0 2)$ (i.e., to $\{\emptyset\}$, the set containing the empty function) 
 by letting $f(\emptyset)=\emptyset$.

\medskip

For arbitrary $n\geq 1$, suppose by induction we have already defined $\bar p^{n-1}$, and $f$ restricted to
\[
 \bigcup_{n' < n} {}^{n'\cap\alpha}({}^{n'} 2).
\]
We first define $d^k_{n}$ by induction on $k\in\{1, \hdots, n\cap\alpha\}$.
The definition will be so that Requirements~\ref{r.aux-first}--\ref{r.d-n-k-coheres} above are satisfied.
At the same time, we build a sequence $\bar p^{n,1} \geq^{\Sigma \res n}_{n-1}  \bar p^{n,2} \hdots \geq^{\Sigma \res n}_{n-1} \bar p^{n,n\cap\alpha}$, starting with $\bar p^{n-1}$. 
It is crucial here that by Equation \eqref{e.d} we have already determined the first $n\cap\alpha$ elements of $\Sigma$, i.e., $\langle \sigma_l \mid l < n\cap \alpha\rangle$.

So let $\bar p^{n,1} = \bar p^{n-1}$.
Now define $d^1_{n}$ as follows.
For the moment denote by $d^*_{n}$ the unique map  
\[
d^*_{n}\colon{}^{n}2\to \bar p^{n,1}(0)^*_{n-1}
\] 
which preserves the lexicographic ordering. 
Recalling again that $\sigma_0=0$, for each $\bar t \in {}^{\{0\}}({}^{n} 2)$,
define $d^1_{n}(\bar t)$ be the unique $\bar{t}' \in{}^{\{\sigma_0\}}(\bar p^{n,1}(\sigma_0)^*_{n-1})$ such that
\[
\bar{t}' (0) = d^*_n(\bar t(0)).
\]
Clearly, \ref{r.d-1-1} is satisfied.

Now assume $1 \leq k < n\cap \alpha$ and suppose by induction that $d^k_{n}$ and $\bar p^{n,k}$ are already defined.
We shall presently define $d^{k+1}_{n}$.
Also, in finitely many steps we reach $\bar p^{n,k+1}\leq^{\Sigma \res n}_{n-1}\bar p^{n,k}$
such that for each $\bar t \in {}^{k}({}^{n}2)$ we can find $T(\bar t)\subseteq {}^{<\omega}2$ 
so that
\begin{equation}\label{e.fix-splitting-level}
(\bar p^{n,k+1}\res \sigma_{k})_{d^k_{n}(\bar t)}\forces_{\bbP\res\sigma_{k}} \bar p^{n,k+1}(\sigma_{k})^*_{n-1} = \check T(\bar t).
\end{equation}
Now let $d^{k+1}_{n}$ be the unique map satisfying \ref{r.d-n-k-coheres} such that for each $\bar t \in {}^{k}({}^{n}2)$ and $t \in {}^{n}2$,
\[
t \mapsto d^{k+1}_{n}(\bar t\conc t)(\sigma_k)
\]
is the unique map
\[
{}^{n} 2 \to T(\bar t)
\]
which preserves lexicographic order. 
Clearly \ref{r.aux-first} and \ref{r.e-k-n} are also satisfied by construction.

\medskip

For each $\bar t \in {}^{n\cap\alpha}({}^n 2)$ we shall now define $f(\bar t)$ and a condition $p^{\bar t}$ by induction on the lexicographic order on
${}^{n\cap\alpha}({}^n 2)$, starting with the outcome of the previous finite sub-induction, i.e., $\bar p^{n,n\cap\alpha}$.
We ensure that $\langle \bar p^{\bar t}\mid\bar t \in {}^{n\cap\alpha}({}^n 2)\rangle$ is a $\leq^{\Sigma\res n}_{n-1}$-descending sequence in $\bar t$ with respect to the lexicographic ordering.

\medskip

Suppose now that we have already defined $f$ on the all elements ${}^{n\cap\alpha}({}^n 2)$ of which come before $\bar t$ in lexicographic order.
If $\bar t$ is lexicographically minimal in ${}^{n\cap\alpha}({}^n 2)$, let $\bar p^* = \bar p^{n,n\cap\alpha}$, otherwise let $\bar p^* = \bar p^{\bar t^*}$ where $\bar t^*$ is the lexicographic predecessor of $\bar t$.
We may find $\bar p^{\bar t} \leq^{\Sigma\res n}_{n-1} p^{\bar t^*}$
and $f(\bar t) \in {}^n \omega$ such that
\begin{equation}\label{e.f}
(\bar p^{\bar t})_{d^n_n(\bar t)} \forces_{\bbP} \dot f \res n = \check f(\bar t).
\end{equation}

\medskip

Finally, we let $\bar p^n$ be  the condition reached after having done this for every $\bar t \in {}^{n\cap\alpha}({}^n 2)$---i.e., $\bar p^n= \bar p^{\bar t_*}$ where $\bar t_*$ is maximal in ${}^{n\cap\alpha}({}^n 2)$ with respect to the lexicographic ordering.
Since $\bar p^n \leq \bar p^{\bar t}$ for every $\bar t \in {}^{n\cap\alpha}({}^n2)$,  Equation \eqref{e.f} shows that Requirement \ref{r.f} is satisfied. 

\medskip

Let $\bar q$ be the greatest lower bound of the sequence $\langle \bar p^n \mid n\in\omega\rangle$.
It is clear that $f$ is monotone and proper and thus codes a function on ${}^\alpha({}^\omega 2)$.
We verify that
\[
\bar q \forces_{\bbP} \dot f = (f^* \circ \dot e^{\bar q,\Sigma})(\bar s_{\dot G}\res \supp(\bar q)).
\]
First, letting
\[
\codeforinvofe = \bigcup_{n\in\omega\setminus\{0\}} d^{n\cap\alpha}_n
\]
we obtain a map
\[
\codeforinvofe\colon \bigcup_{n\in\omega} {}^{n\cap\alpha}({}^n 2) \to \parfin(\supp(\bar q), {}^{<\omega} 2).
\]
Note that $\bar q \leq^{\Sigma \res n}_{n-1} \bar p^{n,k}$ for every $n \in \omega\setminus\{0\}$ and $k \in \{1, \hdots, n\cap\alpha\}$, so that Equation \eqref{e.fix-splitting-level} still holds if we replace $\bar p^n_{k}$ by $\bar q$ everywhere.

Therefore if $\bar t \in {}^{n\cap\alpha}({}^n 2)$ for $n>0$, $\bar q$ accepts $\codeforinvofe(\bar t)$, $\dom(\codeforinvofe(\bar t))=\{\sigma_0, \hdots, \sigma_{(n\cap\alpha)-1}\}$ and for each $\sigma \in \dom(\codeforinvofe(\bar t))$
\[
\bar q_{\codeforinvofe(\bar t)}\forces_{\bbP} \codeforinvofe(\bar t)(\sigma) \in \bar q(\sigma)^*_{n-1}
\]
and in fact for each $k$ such that $1\leq k < n\cap\alpha$
\[
(\bar q \res \sigma_{k})_{\codeforinvofe(\bar t)\res k}\forces_{\bbP\res\sigma_{k}} \bar q(\sigma_{k})^*_{n-1} = T(\bar t\res k)
\]
and $(\bar q\res\sigma_{k})_{\codeforinvofe(\bar t)\res k}$  also forces that the map $t \mapsto \bar t\res k \conc t$ is the unique lexicographically order preserving map
from $\bar q(\sigma_{k})_n$ to ${}^n 2$.
Therefore $\codeforinvofe$ must be a code for the inverse of $\dot e^{\bar q, \Sigma}$.

Moreover, for any $\bar t \in \bigcup_{n\in\omega} {}^{n\cap\alpha}({}^n 2)$ it holds by construction that
\[
(\bar q)_{\codeforinvofe(\bar t)} \forces_{\bbP} f(\bar t) \subseteq \dot f
\]
and so
\[
\bar q \forces_{\bbP} \dot f = \bigcup \big\{ f(\bar t) \mid \bar t \in \bigcup_{n\in\omega} {}^n({}^n 2) \wedge \big(\forall \sigma\in\dom(\codeforinvofe(\bar t))\big)\; \codeforinvofe(\bar t)(\sigma) \subseteq \bar s_{\dot G} (\sigma)\big\}
\]
and hence
$\bar q \forces_{\bbP} \dot f = f^* \circ (\codeforinvofe^*)^{-1} (\bar s_{\dot G})$. 
Since we have shown that $\bar q\forces_{\bbP}\codeforinvofe^* = (\dot e^{\bar q, \Sigma})^{-1}$, this finishes the proof.
\renewcommand{\qedsymbol}{{\tiny  Lemma~\ref{l.continuous.reading}~}$\Box$}
\end{proof}

\medskip

It is crucial to our construction that Sacks forcing does not add a function from $\bbN$ to $\bbN$ which is eventually different from all such functions in the ground model.
For the readers convenience and to provide a blueprint for the more complicated construction in Lemma~\ref{l.main.product} below,
we give a simple proof. 
\begin{lem}\label{l.sacks-ev-diff}
Suppose $p\in\bbS$ and $\dot f$ is a Sacks name such that $p \forces_{\bbS} \dot f\in {}^\omega\omega$ .
There is $h \in {}^\omega\omega$ and a condition $q \in \bbS$ stronger than $p$ such that
$q \forces_{\bbS}$``$\check h$ and $\dot f$ are not eventually different.''
\end{lem}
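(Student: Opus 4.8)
The plan is to combine the strong continuous reading of names from Lemma~\ref{l.continuous.reading} with a fusion argument that builds $h$ and $q$ simultaneously. First I would apply Lemma~\ref{l.continuous.reading} to $p$ and $\dot f$ in the case of a single copy of $\bbS$ (length $\lambda = 1$, so $\supp(\bar q)=\{0\}$ and $\bbP\res\sigma_0$ is trivial). In this case the name $\dot e$ is simply a ground model, lexicographically order-preserving homeomorphism $[q_0]\to{}^\omega 2$, so composing it with $f^*$ yields a \emph{ground model} continuous function $g\colon[q_0]\to{}^\omega\omega$ together with a condition $q_0\leq p$ such that
\[
q_0 \forces_{\bbS} \dot f = g(\dot x_{\dot G}),
\]
where $\dot x_{\dot G}$ denotes the Sacks generic real (the branch through $q_0$). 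This reduces the problem to a purely combinatorial statement about the continuous function $g$: I must find a perfect subtree $S\subseteq q_0$ and an $h\in{}^\omega\omega$ such that for \emph{every} branch $x\in[S]$, $h$ and $g(x)$ agree infinitely often.

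Next I would construct $S$ as an increasing union $S=\bigcup_{n\in\omega} S_n$ of finite balanced subtrees of $q_0$, where each $S_n$ has exactly $2^n$ leaves, all at the same length, while simultaneously defining $h$ on an increasing set of coordinates. Passing from $S_n$ to $S_{n+1}$, I process each of the $2^n$ leaves $\ell$ in turn: I pick a coordinate $m_\ell$ strictly larger than every coordinate used so far (and distinct for distinct leaves at this stage), choose some $x\in[q_0]$ with $x\supseteq\ell$, and use continuity of $g$ to find $N\geq\lh(\ell)$ so that $g(y)(m_\ell)=g(x)(m_\ell)$ for \emph{all} $y\in[q_0]$ with $y\res N = x\res N$. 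Setting $\ell'=x\res N$, I then set $h(m_\ell):=g(x)(m_\ell)$, extend $\ell'$ to a splitting node $\ell''$ of $q_0$, and put the two immediate successors $\ell''\conc 0,\ell''\conc 1$ into $S_{n+1}$ as new leaves; coordinates never touched during the construction are set to $0$. This is exactly the type of fusion yielding $q_0\geq^{\emptyset}_0 q_1\geq^{\emptyset}_1\cdots$, and its greatest lower bound $q=\bigcup_n S_n$ is a perfect tree with $q\leq q_0\leq p$.

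Finally I would verify the conclusion. Every branch $x\in[q]$ passes through exactly one leaf $\ell$ of each $S_n$, and by construction $x\supseteq\ell'\supseteq\ell$ for the corresponding node $\ell'$, whence $g(x)(m_\ell)=g(x')(m_\ell)=h(m_\ell)$ by the choice of $N$; since $n$ is arbitrary this gives infinitely many agreements, and because the coordinates $m_\ell$ are chosen fresh at every step $h$ is consistently defined. Thus $q\forces_{\bbS}$ that $\dot x_{\dot G}\in[q]$, so $h$ and $g(\dot x_{\dot G})=\dot f$ agree infinitely often, i.e.\ $q\forces_{\bbS}$ ``$\check h$ and $\dot f$ are not eventually different,'' as required. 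I expect the main conceptual obstacle to be precisely the tension between $h$ being a \emph{single} function and $g$ taking many different values across $[q_0]$: one cannot force agreement everywhere at a common coordinate. The resolution, and the crux of the argument, is that eventual difference only needs to be defeated \emph{infinitely often}, which lets me assign a distinct fresh coordinate to each branch-segment and read off $g$'s locally constant value there; the remaining points (perfectness of $q$ and correctness of the fusion bookkeeping) are routine.
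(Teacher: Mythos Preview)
Your proposal is correct and follows essentially the same approach as the paper: obtain continuous reading of $\dot f$ below some condition, then build a perfect subtree by assigning to each finite branch-segment a fresh coordinate and setting $h$ there to the (locally constant) value of the continuous function. The only differences are cosmetic---the paper obtains continuous reading directly rather than via the heavier Lemma~\ref{l.continuous.reading}, and it uses a fixed bijection $\#\colon{}^{<\omega}2\to\omega$ to assign coordinates in advance (so $h$ is automatically total), whereas you choose coordinates dynamically and fill the remaining values with~$0$.
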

\begin{proof}
Fix a bijection $\# \colon {}^{<\omega} 2 \to \omega$.
Suppose we are given $p$ and $\dot f$ as in the Lemma. We may assume (by strengthening $p$ if necessary) that we can find a monotone and proper function (as discussed in Section~\ref{s.prelim}) $f \colon p \to {}^{<\omega}\omega$ such that (denoting by $s_{\dot G}$ the Sacks generic real) 
\[
p \forces_{\bbS} f^*(s_{\dot G}) = \dot f.
\]
We inductively build a map $c \mapsto t_c$, from ${}^{<\omega}2$ to $p$ and simultaneously define $h$ as follows.
The induction is over the length of $c \in {}^{<\omega} 2$.

Let $t_{\emptyset}$ be some splitting node in $p$ such that $\#(\emptyset) \in \dom(f(t_\emptyset))$.
Also, define $h(\#(\emptyset))$ to be $f(t_\emptyset)(\#(\emptyset))$.

For the inductive step, suppose that we have already defined $t_{c}$.
For each $i\in\{ 0,1\}$, let $t_{c \conc i}$ be some splitting node in $p$ properly extending $t_c$ which is long enough so that
$\#(c\conc i) \in \dom(f(t_{c \conc i})$)).
Also, define $h(\#(c \conc i))$ to be $f(t_{c\conc i})(\#(c \conc i))$.

Finally, letting $q = \{t_c \mid c \in {}^{<\omega} 2\}$, we obviously obtain a Sacks condition below $p$.

\medskip

It remains to show $q$ and $h$ accomplish what is claimed.
Towards a contradiction, we may suppose we have a condition $q' \in \bbS$ stronger than $q$ and $m\in\omega$ such that
\begin{equation}\label{e.catch-name}
q' \forces \forall n \geq m \; f^*(s_{\dot G})(n) \neq \check h(n)
\end{equation}
Since $q' \leq q$, we can find $c \in {}^{<\omega} 2$ so that
$t_c \in q'$ and $\#(c) \geq m$.
But as $(q')_{t_c} \forces \dot f(\#(c)) = \check f(t_c)(\#(c))$ and $f(t_c)(\#(c))=h(\#(c)$, this contradicts Equation~\eqref{e.catch-name}.
\end{proof}

\medskip

For the proof of our main theorem, we need a technical strengthening of the fact that Sacks forcing does not add an eventually different real.
Firstly, the function $h$ which `catches' $\dot f$ is also required to avoid each function in a fixed countable family $\mathcal F_0$ of functions in the ground model. 
Note that the set of $h$ with this property is meager. 
We will see in Section~\ref{s.property.ned} that  for ${}^\omega\omega$-bounding forcings, not adding an eventually different real is in fact equivalent to this (apparently stronger) property.

Secondly, we need a more ``absolute'' version of this property, just as in Lemma~\ref{l.dst}. This is made necessary by the fact that to prove  Theorem~\ref{t.main} we must work within $\eL_{\omega_1}$ and anticipate functions added by an uncountable forcing while working entirely with countable fragments.

\medskip

For the following lemma recall that for $\tilde p \in \bbS^\alpha$, $[\tilde p]$ denotes $\{\bar x \in {}^\alpha({}^\omega 2) \mid (\forall i \in \alpha)\;  \bar x(i) \in [\tilde p(i)]\}$.
The reader will not find it hard to see that essentially the same proof yields Lemma~\ref{l.dst}.

\begin{lem}\label{l.main.product}
Let $\mathcal F_0 \subseteq {}^\omega \omega$ be countable, let $\alpha \leq\omega$, and suppose $f \colon {}^{<\alpha}({}^{<\omega} 2) \to {}^{<\omega} \omega$ is a code for a continuous function
$f^* \colon {}^\alpha({}^\omega 2) \to {}^\omega\omega$ such that (denoting by $\bar s_{\dot G}$ the generic sequence of Sacks reals for the fully supported product $\bbS^\alpha$)
\[
\forces_{\bbS^\alpha} f^*(\bar s_{\dot G}) \text{ is eventually different from every function in $\mathcal F_0$.}
\]
There is  $h \in {}^\omega\omega$ and $\tilde p\in\bbS^\alpha$ such that
\begin{equation}\label{e.eventually-diff}
(\forall \bar x \in [\tilde p]) \; h\text{ is not eventually different from }f^*(\bar x)
\end{equation}
and
$h$ is eventually different from every function in $\mathcal{F}_0$.
\end{lem}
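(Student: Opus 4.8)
The plan is to mimic the fusion argument behind Lemma~\ref{l.sacks-ev-diff}, but carried out in the product $\bbS^\alpha$ and made uniform over all branches of the resulting condition. I will build a fusion sequence $\tilde p^0 \geq \tilde p^1 \geq \cdots$ in $\bbS^\alpha$ whose greatest lower bound is the desired $\tilde p$, and simultaneously define $h$. Two tasks must be met: \emph{meeting}, that every $\bar x \in [\tilde p]$ satisfies $h(m) = f^*(\bar x)(m)$ for infinitely many $m$; and \emph{avoiding}, that $h$ is eventually different from each $f\in\calF_0$. Enumerate $\calF_0 = \{f_k \mid k \in \omega\}$ (if $\calF_0$ is finite the argument only simplifies). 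For the avoiding task the key observation is that $\bbS^\alpha$ is ${}^\omega\omega$-bounding: given any condition $\tilde q$ and any $k$, since $\tilde q \forces f^*(\bar s_{\dot G})$ is eventually different from $f_k$, bounding yields $\tilde r \leq \tilde q$ and $N \in \omega$ with $\tilde r \forces (\forall n \geq N)\, f^*(\bar s_{\dot G})(n) \neq f_k(n)$; as this is a $\Pi^0_1$ statement about the generic and $f^*$ is continuous, it follows that $f^*(\bar x)(n) \neq f_k(n)$ for \emph{every} $\bar x \in [\tilde r]$ and every $n \geq N$. I will apply this locally, below each boundary node during the fusion, rather than fixing $\tilde r$ in advance, so that the refinement is automatically compatible with the part of the condition already built.

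The fusion and the meeting task proceed as follows. At stage $n$ I activate coordinate $n$; the $n$-th level boundary of $\tilde p^n$ is a finite set $B_n$ of nodes $\bar u$ specifying finite information on the active coordinates $0, \dots, n$. Processing the nodes $\bar u \in B_n$ one at a time, I first refine below $\tilde p^n_{\bar u}$ to force $f^*(\bar s_{\dot G})$ eventually different from $f_n$, recording a threshold $N_n$ (the maximum over the finitely many $\bar u$). Then I pick a fresh index $m_{\bar u}$ larger than all previously used indices and than $\max(N_0, \dots, N_n)$, choose any branch $\bar y_{\bar u}$ through the current subtree above $\bar u$, and --- by extending the stems on the active coordinates above $\bar u$ and pinning the inactive coordinates to a common longer stem --- refine so that $f^*(\cdot)(m_{\bar u})$ is decided, with constant value $v_{\bar u} = f^*(\bar y_{\bar u})(m_{\bar u})$, on all branches through $\bar u$ in the refined condition; I then set $h(m_{\bar u}) = v_{\bar u}$. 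At indices $m$ not of the form $m_{\bar u}$ I let $h(m)$ be any value distinct from $f_k(m)$ for the finitely many $k$ with $N_k \leq m$. After placing the new splitting nodes above the extended stems I obtain $\tilde p^{n+1} \leq_n \tilde p^n$, and the limit $\tilde p$ is a genuine perfect product condition since every coordinate receives a new split at every sufficiently late stage.

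To verify the conclusion, note that any $\bar x \in [\tilde p]$ passes, at each stage $n$, through a unique node $\bar u^n \in B_n$ on the active coordinates and extends the common inactive stem fixed at that stage; hence $f^*(\bar x)(m_{\bar u^n}) = v_{\bar u^n} = h(m_{\bar u^n})$, and since the $m_{\bar u^n}$ are infinitely many distinct indices this gives \eqref{e.eventually-diff}. For avoiding, fix $k$: at every non-special index $m \geq N_k$ we chose $h(m) \neq f_k(m)$ by construction, and every special index $m_{\bar u}$ arising at a stage $n \geq k$ satisfies $m_{\bar u} \geq N_k$ and is read off a branch $\bar y_{\bar u}$ lying in a subtree forcing eventual difference from $f_k$ beyond $N_k$, so $h(m_{\bar u}) \neq f_k(m_{\bar u})$; as only the finitely many special indices from stages $n < k$ can fail, $h$ is eventually different from $f_k$.

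The main obstacle is the interplay, special to the product, between the two tasks. Unlike the single-tree, single-generic argument of Lemma~\ref{l.sacks-ev-diff}, here the value $f^*(\bar x)(m)$ may depend on coordinates that are not yet active, so one must pin the inactive coordinates to a common stem in order to make $f^*(\cdot)(m_{\bar u})$ depend only on the finite node $\bar u$; and this pinning must be accumulated compatibly across the finitely many $\bar u \in B_n$ (and across all later stages, where the pinned coordinates are re-split above the fixed stem). Keeping this stem-management consistent with the node-by-node threshold refinements for the avoiding task --- so that the branch $\bar y_{\bar u}$ one reads from genuinely avoids $f_0, \dots, f_n$ at large indices --- is the delicate point, and it is precisely the local bounding argument of the first paragraph that guarantees the required refinements exist below each boundary node.
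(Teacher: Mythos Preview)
Your proposal is correct and follows essentially the same fusion-based strategy as the paper: both build a fusion sequence in $\bbS^\alpha$, at each stage run through the finitely many boundary nodes, extend below each so that $f^*$ is decided at a fresh index with a value avoiding the relevant $f_k$'s, and record that value in $h$. Your explicit threshold bookkeeping $N_0,\dots,N_n$ via the $\Pi^0_1$-absoluteness observation (which, incidentally, does not really need bounding---deciding the witness $N$ is just a density argument) is a slightly more careful packaging of the avoiding step than the paper's requirement~(iii), and your discussion of pinning inactive coordinates spells out what the paper handles implicitly through the code $f$; otherwise the arguments coincide.
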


\begin{proof}
Let $\mathcal F_0$ and $f \colon {}^{<\alpha}({}^{<\omega} 2) \to {}^{<\omega} \omega$ as above be given.
Let $\mathcal F_0$ be enumerated as $\langle f_k\mid k\in\omega\rangle$. 
To obtain $\tilde p \in \bbS^\alpha$, we will build a fusion sequence $\tilde p^0  \geq^\emptyset_0 \tilde p^1 \geq^{1\cap\alpha}_1 \tilde p^2 \geq^{2\cap\alpha}_2 p^3 \hdots$ (note here once more $\leq^\emptyset_0$ is just $\leq$) with ${\tilde p}^0= {}^{\alpha}({}^{<\omega}2)$ whose greatest lower bound will be $\tilde p$.
At the same time we define a partial function $h_0$ from $\bbN$ to $\bbN$ and injective functions
 \begin{align*}
 \overline{\#}\colon& \bigcup_{n\in\omega\setminus\{0\}} \big(\tilde p^{n-1}\res (n\cap\alpha)\big)_n  \to \bbN\\
 \intertext{and}
 d\colon& \bigcup_{n\in\omega\setminus\{0\}} \big(\tilde p^{n-1}\res (n\cap\alpha)\big)_n \to \bigcup_{n\in\omega} \big(\tilde p\res (n\cap\alpha)\big)_n.
 \end{align*}
For arbitrary $n\geq 1$, suppose we have defined $\tilde p^{n-1}$ and $h_0 \circ \overline{\#}$ as well as $d$ restricted to
\[
 \bigcup_{0< n' < n} \big(\tilde p^{n'-1}\res (n'\cap\alpha)\big)_{n'}.
\]
Letting 
\[
T =  \big(\tilde p^{n-1}\res (n\cap\alpha)\big)_n.
\]
we shall now for each $\bar t \in T$ define $\overline{\#}(\bar t)$, $h_0(\overline{\#}(\bar t))$, $d(\bar t)$ and a condition $\tilde p^{\bar t}$ by induction on the lexicographic order.
We ensure that $\langle \tilde p^{\bar t}\mid\bar t \in T \rangle$ is a $\leq^{(n-1) \cap \alpha}_{n-1}$-descending sequence in $\bar t$ with respect to the lexicographic ordering.

\medskip

We begin by letting $\tilde p^{\bar t} = \tilde p^{n-1}$ when $\bar t$ is the first element of $T$.
Suppose now that we have already defined $\overline{\#}$, $h\circ \overline{\#}$, and $d$ on the all elements $T$ of which come before $\bar t$ in lexicographic order, and have reached $\tilde p^{\bar t^*}$ where $\bar t^*$ is the lexicographic predecessor of $\bar t$.
We may find $\bar u \in {}^{<\alpha}({}^{<\omega} 2)$ such that  $\bar u$ extends $\bar t$ 
and natural numbers $m, m'$ so that
\begin{enumerate}[label=(\roman*)]
\item $f(\bar u)(m)=m'$,
\item $m \notin \{ \overline{\#}(\bar t') \mid \bar t' \mathbin{<^{\text{m}}_{\text{lex}}} \bar t, \bar t' \in \dom(\overline{\#})\}$,
\item\label{i.diff.from.F_0} for each $j \leq m$, $m' \neq f_j(m)$,\label{r.e-d}
\item\label{i.splits} For each $k\in \dom(\bar u)$, $\bar u(k)$ is a splitting node of $\bar p^{\bar t^*}(k)$. In particular, $p^{\bar t^*}$ accepts $\bar u$.
\end{enumerate}
Item~\ref{i.diff.from.F_0} is possible because $(f^*)^{\Vee[\dot G]}(\bar s_{\dot G})$ is forced to be eventually different from $f_j$ for each $j\in\omega$.
Let $\tilde p^{\bar t}$ be the unique condition such that for each $\bar s \in T \setminus \{ \bar t \}$,
$(\tilde p^{\bar t})_{\bar s} = (\tilde p^{\bar t^*})_{\bar s}$ and
\[
(\tilde p^{\bar t})_{\bar t} = (\tilde p^{\bar t^*})_{\bar u}
\]
Set $\overline{\#}(\bar t)=m$, $h_0(\overline{\#}(\bar t))=m'$, and $d(\bar t)=\bar u$.
Finally, let $\tilde p^n$ be $\tilde p^{\bar t_{\text{max}}}$ where $\bar t_{\text{max}}$ is the lexicographically maximal element of $T$.
Note for later use that if $\bar t \in (\tilde p^{n-1} \res n)_n$ then $d(\bar t)$ is accepted by $\tilde p^n$ and that
\begin{equation}\label{e.h.catches.f}
f(d(\bar t))(\overline{\#}(\bar t)) = h_0(\overline{\#}(\bar t)).
\end{equation}
Since for each $n \in \omega$, $\tilde p^{n+1} \leq^{n\cap\alpha}_{n} \tilde p^n$, the sequence $\langle \tilde p^n \mid n\in \omega\rangle$ has a greatest lower bound $\tilde p$.
For the same reason and because of Item~\ref{i.splits},
\begin{equation}\label{e.front}
\ran(d)  = \bigcup_{n\in\omega}\big(\tilde p \res (n\cap\alpha)\big)_n
\end{equation}
It is then clear by Requirement~\ref{r.e-d} above and by construction that $h_0$ is eventually different from each function in $\mathcal F_0$.
Extend $h_0$ to a total function $h$ in any way such that $h$ is still eventually different each of these functions.

\medskip

It remains to show \eqref{e.eventually-diff}.
Towards a contradiction, suppose we can find $\bar x \in [\tilde p]$ and $m$ such that
for $h(n) \neq f^*(\bar x)$ for each $n \geq m$.
Since $\overline{\#}$ and $d$ are injective and by \eqref{e.front} we can find $n\in \omega$ and 
\[
\bar u \in \big(\tilde p\res (n\cap \alpha)\big)_n
\] 
such that for each $k\in n\cap \alpha$,
$\bar u(k) \subseteq \bar x(k)$
and moreover, 
$\overline{\#}\circ d^{-1}(\bar u) \geq m$.
Letting $\bar t= d^{-1}(\bar u)$,
by \eqref{e.h.catches.f} it holds that
\[
f^*(\bar x)(\overline{\#}(\bar t))=\check h(\overline{\#}(\bar t))
\]
contradicting our assumption that $f^*(\bar x)$ is different from $h$ above $m$.
\end{proof}

\section{Constructing the co-analytic Sacks-indestructible family}\label{s.main}

\begin{thm}\label{t.sigma-1-2-implies-pi-1-1}
Suppose $\mathcal E$ is a $\Sigma^1_{2}$ \emph{med} family. There is a $\Pi^1_1$ \emph{med} family $\mathcal E'$ such that for any forcing $\bbP$, if $\mathcal E$  is $\bbP$-indestructible so is $\mathcal E'$.
\end{thm}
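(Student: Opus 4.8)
The plan is to manufacture $\calE'$ by attaching to each member of $\calE$ a canonical countable well-founded model witnessing its membership, thereby converting the existential real quantifier hidden in the $\Sigma^1_2$ definition into the $\Pi^1_1$ assertion that a coded model is well-founded. First I would put the definition of $\calE$ into the normal form $a \in \calE \iff \exists b\, \phi(a,b)$ with $\phi$ a $\Pi^1_1$ formula. For $a \in \calE$, Shoenfield absoluteness guarantees that $\exists b\, \phi(a,b)$ already holds in $L[a]$, hence in $L_{\gamma}[a]$ for some countable $\gamma$ with $L_\gamma[a]$ modeling a sufficiently large finite fragment of $\ZFC$; let $\gamma(a)$ be least such and $M_a = L_{\gamma(a)}[a]$. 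Since $M_a$ is a genuine countable transitive level of $L[a]$ and $\phi$ is $\Pi^1_1$, Mostowski absoluteness supplies the converse: whenever a \emph{well-founded} $N$ thinks it is the least $L_\delta[a]$ satisfying $\exists b\,\phi(a,b)$, then truly $a \in \calE$. The canonical constructibility wellorder of $M_a$ furnishes a single canonical real code $w_a$ for $M_a$; this canonicity is what will rule out duplicate members of $\calE'$.

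Second, I would fix a partition $\bbN = A \sqcup B$ into two infinite sets with order isomorphisms $A \cong \bbN \cong B$ and declare $e \in \calE'$ exactly when, transported along these isomorphisms, $e\restriction A$ equals some real $a$ while $e\restriction B$ equals $n \mapsto \langle a(n), w_a(n)\rangle$, with $w_a$ the canonical code of the minimal well-founded model described above. Every clause except ``the decoded model is well-founded'' is arithmetic in $e$, so $\calE'$ is $\Pi^1_1$. The design decouples two tasks: the copy of $a$ sitting on the fixed set $A$ will drive maximality, while the $B$-part both carries the $\Pi^1_1$ witness (in its second coordinate) and pairs it with $a$ (in its first coordinate) so that distinct members are eventually different on $B$ as well as on $A$. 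Canonicity of $w_a$ ensures exactly one $e_a \in \calE'$ per $a \in \calE$, so that $\calE' = \{e_a \mid a \in \calE\}$.

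Third, I would check that $\calE'$ is a \emph{med} family and that indestructibility transfers. Pairwise eventual difference holds because for distinct $a,a' \in \calE$ the functions $e_a,e_{a'}$ restrict on $A$ to the eventually different functions $a,a'$, and their first coordinates on $B$ do likewise, so $e_a, e_{a'}$ are eventually different on all of $\bbN$. Maximality transfers via $A$: given any $g \in {}^\omega\omega$, its transported restriction $g\restriction A$ agrees infinitely often with some $a \in \calE$ by maximality of $\calE$, whence $g$ agrees infinitely often with $e_a$ on $A \subseteq \bbN$, i.e. $g$ is not eventually different from $e_a$. For indestructibility I would note that $L[a]$, the ordinal $\gamma(a)$, the model $M_a$ and its canonical code $w_a$ are all absolute between $V$ and any $V[G]$; hence every ground-model $e_a$ persists, $(\calE')^{V[G]}$ equals $\{e_a \mid a \in \calE^{V[G]}\}$, and the same two arguments run inside $V[G]$ using $\forces_\bbP$ ``$\calE$ is a \emph{med} family'' show that $(\calE')^{V[G]}$ is a \emph{med} family.

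The step I expect to be the main obstacle is pinning down the canonical model and its code so that membership in $\calE'$ is genuinely $\Pi^1_1$ while yielding a \emph{unique} $e_a$ per $a$: one must verify that ``$N$ is well-founded, internally equals the least $L_\delta[a]$ satisfying $\exists b\,\phi(a,b)$, and $w$ enumerates $N$ in its constructibility order'' is the $\Pi^1_1$ clause (well-foundedness) conjoined with arithmetic clauses, and that this pins $N \cong M_a$ uniquely. Without such uniqueness, two members of $\calE'$ could share an $A$-part and fail to be eventually different. The remaining combinatorics (the $A/B$ bookkeeping) and the forcing-absoluteness of $L[a]$ are then routine.
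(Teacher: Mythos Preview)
Your proposal is correct and follows the same high-level template as the paper---interleave each $a\in\calE$ with a canonical witness so that the resulting family is $\Pi^1_1$, eventually different, and inherits maximality through the ``$A$-half''---but you take a genuinely different route to obtain the canonical witness. The paper invokes $\Pi^1_1$ uniformization (Kondo's theorem): writing $\calE=\{h\mid (\exists z)\,(h,z)\in R\}$ with $R$ co-analytic, it thins $R$ to the graph of a function and encodes the pair $(h,R(h))$ on the odd coordinates via $n\mapsto\tilde\#(h\res n, R(h)\res n)$; the $\Pi^1_1$ definition of $\calE'$ then reads off $(h,z)$ from the odd part and asserts $(h,z)\in R$. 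You instead manufacture canonicity set-theoretically, taking the minimal $L_{\gamma(a)}[a]$ containing a witness (via Shoenfield) and its $<_{L[a]}$-canonical code $w_a$; your $\Pi^1_1$ clause is ``the decoded model is well-founded,'' with condensation and Mostowski absoluteness for $\Pi^1_1$ ensuring that any well-founded $N$ satisfying the internal description really is $M_a$. Both approaches need the witness-half to carry a copy of $a$ so that distinct members are eventually different on \emph{both} halves; you do this with the first coordinate of the $B$-part, the paper does it by encoding $h\res n$ alongside $z\res n$. The paper's argument is shorter since uniformization is an off-the-shelf tool, while yours is more self-contained and makes the absoluteness bookkeeping (needed anyway for indestructibility) explicit; either way the remaining combinatorics and the transfer of indestructibility go through exactly as you outline.
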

For the proof, fix a bijection
\[
\tilde\#\colon \bigcup_{n\in\omega}({}^{n}\omega)^2\to\omega.
\]
and for each $i\in\{0,1\}$ let $c_i$ be the map from $\bbN$ to $\bigcup_{n\in\omega}({}^{n}\omega)^2$ such that
\[
m \mapsto (c_0(m),c_1(m))
\]
is the inverse function to $\tilde\#$, that is, the function such that
for all $s_0, s_1 \in \bigcup_{n\in\omega}({}^{n}\omega)^2$,
$c_i(\tilde\#(s_0,s_1)) = s_i$.
\begin{proof}[Proof of Theorem~\ref{t.sigma-1-2-implies-pi-1-1}.]
Suppose $R$ is a $\Pi^1_1$ subset of $({}^\omega\omega)^2$ such that the set $\mathcal E$ defined by
\[
\mathcal E = \{h \in {}^\omega\omega \mid (\exists z \in {}^\omega\omega)\; (h,z)\in R\}
\]
is a \emph{med} family.
By $\Pi^1_1$ uniformization, we can assume $R\cap (\mathcal E\times{}^\omega\omega)$ is the graph of a function;
of course when $h\in\mathcal E$ we can then write $R(h)$ for the unique $z$ such that $(h,z)\in R$.

Define for each pair $(h,z) \in ({}^\omega\omega)^2$  a function $g_{h,z}\colon \bbN\to\bbN$ by letting, for $n\in\bbN$
\[
g_{h,z}(n) =\begin{cases}
h(\frac{n}{2})&\text{if $n$ is even,}\\
\tilde\#(h\res n, z\res n)&\text{otherwise.}
\end{cases}
\]
Let
\begin{equation}\label{e.def-E'}
\mathcal E' = \{g_{h,R(h)} \mid h \in \mathcal E \}
\end{equation}
Clearly $\mathcal E'$ is an eventually different family.
To see $\mathcal E'$ is a \emph{med} family, suppose $f\in {}^\omega\omega$.
Consider the function $f'\in {}^\omega\omega$ defined by
\[
f'(n) = f(2n)
\]
and find $h \in \mathcal E$ such that
$\{n \in \omega \mid h(n) = f'(n) \}$ is infinite.
Thus $\{n \in \omega \mid g_{h,R(h)}(2n) = f(2n) \}$ is also infinite, so $g_{h,R(h)}$ is not eventually different from $f$, proving maximality.

\medskip

It remains to verify that $\mathcal E'$ is $\Pi^1_1$. We leave it to the reader to verify the following equivalence:
\begin{multline}
\label{e.E'} f \in \mathcal E' \Leftrightarrow \Big[ (\forall i\in\{0,1\})(\forall n,n'\in\omega)\; \big[ n < n' \Rightarrow  c_i(f(2n+1)) \subseteq c_i(f(2n'+1))\big]  \wedge\\
(\forall (h,z) \in ({}^\omega\omega)^2) \;
\big( f=g_{h,z} \Rightarrow (h,z) \in R \big)\Big]
\end{multline}
The right-hand-side of this equivalence obviously $\Pi^1_1$.

\medskip

Now let $\bbP$ be any forcing so that $\mathcal E$ is $\bbP$-indestructible and let $G$ be $(\Vee,\bbP)$-generic.
Interpreting the defining $\Sigma^1_2$ formula of $\mathcal E$ in $\Vee[G]$, we obtain a superset $\mathcal E^*$ of $\mathcal E$ which is still an eventually different family by $\Sigma^1_2$ absoluteness, and so $\mathcal E^* = \mathcal E$ because $\mathcal E$ is maximal in $\Vee[G]$.
Also by $\Sigma^1_2$ absoluteness, $R$ is a functional relation in $\Vee[G]$ and \eqref{e.def-E'} and  \eqref{e.E'} hold in $\Vee[G]$.
So the above argument also shows that $\mathcal E'$ is a \emph{med} family in $\Vee[G]$. Since $G$ was arbitrary, $\mathcal E'$ is $\bbP$-indestructible.
\end{proof}
Of course this argument generalizes to \emph{med} families with more complicated definitions provided we assume enough uniformization.

\medskip

Finally we are able prove our main result, from which also follows Theorem~\ref{t.mainmain}.

\begin{thm}\label{t.main}
There is a $\Pi^1_1$ \emph{med} family in $\eL$ which is indestructible by countable support iterations or products of Sacks forcing of any length.
\end{thm}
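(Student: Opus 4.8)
The plan is to invoke Theorem~\ref{t.sigma-1-2-implies-pi-1-1} to reduce the task: it suffices to construct, in $\eL$, a $\Sigma^1_2$ \emph{med} family $\mathcal{E}=\{e_\alpha \mid \alpha<\omega_1\}$ that is indestructible by countable support iterations and products of Sacks forcing. I would build $\mathcal{E}$ by recursion of length $\omega_1$ along the canonical $\Sigma^1_2$-good wellordering $<_L$ of the reals of $\eL$. Using $<_L$ and G\"odel condensation, fix an enumeration $\langle(\bbP_\alpha,\bar p_\alpha,\dot f_\alpha)\mid\alpha<\omega_1\rangle$ of all triples in which $\bbP_\alpha$ is a real coding a countable support product or iteration of Sacks forcing of countable length, $\bar p_\alpha\in\bbP_\alpha$, and $\dot f_\alpha$ is a nice $\bbP_\alpha$-name for an element of ${}^\omega\omega$; arrange the bookkeeping so that the data needed at stage $\alpha$ lie in a countable level $\eL_{\gamma_\alpha}$, and include the trivial forcing so that ground-model reals are treated as well.

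At stage $\alpha$ put $\mathcal{F}_0=\{e_\beta\mid\beta<\alpha\}$, a countable eventually different family. Since $\bbP_\alpha$ and $\mathcal{F}_0$ are countable, the statement ``$\bar p_\alpha\forces_{\bbP_\alpha}\dot f_\alpha$ is eventually different from every function in $\mathcal{F}_0$'' is decided inside $\eL_{\gamma_\alpha}$. If it fails, let $e_\alpha$ be the $<_L$-least function eventually different from $\mathcal{F}_0$. If it holds, apply Lemma~\ref{l.continuous.reading} below $\bar p_\alpha$ to obtain $\bar q_\alpha\leq\bar p_\alpha$, a standard enumeration, and a code for a continuous map $f^*\colon{}^\delta({}^\omega 2)\to{}^\omega\omega$ (with $\delta\leq\omega$) such that $\bar q_\alpha\forces \dot f_\alpha=(f^*\circ\dot e)(\bar s_{\dot G}\res\supp(\bar q_\alpha))$; hence $\forces_{\bbS^\delta}f^*(\bar s_{\dot G})$ is eventually different from every function in $\mathcal{F}_0$. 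Lemma~\ref{l.main.product} (the absolute form of property $\ned$) now yields a function $h$ and a product condition $\tilde p\in\bbS^\delta$ with $h$ eventually different from every function in $\mathcal{F}_0$, yet $h$ \emph{not} eventually different from $f^*(\bar x)$ for every $\bar x\in[\tilde p]$. Pulling $\tilde p$ back along the level-by-level order isomorphism $\dot e$, I obtain $\bar q'_\alpha\leq\bar q_\alpha\leq\bar p_\alpha$ in $\bbP_\alpha$ forcing $\dot e(\bar s_{\dot G})\in[\tilde p]$, so that $\bar q'_\alpha\forces_{\bbP_\alpha}$ ``$h$ is not eventually different from $\dot f_\alpha$''. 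Set $e_\alpha=h$, chosen $<_L$-least with all the above properties so that the construction remains absolute to countable levels of $\eL$.

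Maximality and indestructibility then follow from one reflection argument. Let $\bbP$ be a countable support iteration or product of Sacks forcing of arbitrary length, $G$ generic over $\eL$, and suppose toward a contradiction that some $f\in\eL[G]$ is eventually different from every member of $\mathcal{E}$; fix $\bar p\in G$ and a name $\dot f$ with $\bar p\forces_{\bbP}$ ``$\dot f$ is eventually different from every element of $\mathcal{E}$''. By properness, $(\bbP,\bar p,\dot f)$ reflects into a countable elementary submodel $M$, so that below a suitable condition $\dot f$ is computed by the countable sub-forcing $\bbP\cap M$ on countably many coordinates; as we force over $\eL$, the resulting triple is coded by a real of $\eL$ and hence equals $(\bbP_\alpha,\bar p_\alpha,\dot f_\alpha)$ for some $\alpha$. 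Since $\bar p_\alpha$ forces $\dot f_\alpha$ eventually different from all of $\mathcal{E}$, in particular from $\mathcal{F}_0$, the nontrivial case applied at stage $\alpha$ and produced $e_\alpha\in\mathcal{E}$ with $\bar q'_\alpha\leq\bar p_\alpha$ forcing $e_\alpha$ not eventually different from $\dot f_\alpha$, contradicting $\bar p_\alpha\forces$ ``$\dot f_\alpha$ is eventually different from $e_\alpha$''. Instantiating $\bbP$ with the trivial forcing gives maximality already in $\eL$. Finally, choosing every object at stage $\alpha$ as a $<_L$-least witness makes the recursion uniformly definable over levels $\eL_\gamma$, so by condensation $x\in\mathcal{E}$ iff there is a countable ordinal $\gamma$ with $x\in\eL_\gamma$ and $\eL_\gamma\sat$ ``$x$ is enumerated into $\mathcal{E}$''; expressing the existence of a (well-founded, correct) real code for such an $\eL_\gamma$ yields a $\Sigma^1_2$ definition.

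The step I expect to be the main obstacle is the catching step together with its reflection: one must check that continuous reading of names (Lemma~\ref{l.continuous.reading}) and the ``for all branches'' form of property $\ned$ (Lemma~\ref{l.main.product}) mesh, so that the product condition $\tilde p$ genuinely pulls back to some $\bar q'_\alpha\leq\bar p_\alpha$ in the original iteration or product. This is exactly where the \emph{absoluteness} of Lemma~\ref{l.main.product} is indispensable, since it lets a single ground-model function $e_\alpha$ defeat the name along \emph{every} branch, and hence in every generic extension in which the reflected name reappears. A secondary subtlety is ensuring that the stage-$\alpha$ forcing computations over the countable $\bbP_\alpha$ and $\mathcal{F}_0$ are absolute to $\eL_{\gamma_\alpha}$, so that the canonical $<_L$-least choices keep the construction $\Sigma^1_2$.
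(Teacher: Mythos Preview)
Your overall plan is sound and close to the paper's, but the paper takes a cleaner route that bypasses the most delicate part of your argument. Instead of enumerating countable forcing triples $(\bbP_\alpha,\bar p_\alpha,\dot f_\alpha)$, the paper enumerates directly the set $\mathcal F$ of \emph{codes} $f\colon {}^{<\omega}({}^{<\omega}2)\to{}^{<\omega}\omega$ for continuous maps ${}^\omega({}^\omega 2)\to{}^\omega\omega$. At stage $\delta$ it simply picks the $<_{\eL}$-least pair $(h_\delta,\tilde p_\delta)\in{}^\omega\omega\times\bbS^\omega$ satisfying (I) $h_\delta$ is eventually different from each earlier $h_\xi$, and (II) for every $\bar x\in[\tilde p_\delta]$, $h_\delta$ is not eventually different from $f_\delta^*(\bar x)$. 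Continuous reading of names (Lemma~\ref{l.continuous.reading}) is invoked only once, in the indestructibility proof: given any $\bar p\forces_{\bbP}\dot f$ witnessing failure of maximality, one passes to $\bar q^0\leq\bar p$ and a code $f\in\mathcal F$; since $f$ already appears in the enumeration, the pair $(h,\tilde p)$ catching it is already in $\mathcal E$, and one pulls $\tilde p$ back along $\dot e^{\bar q^0,\Sigma}$ using $\Pi^1_1$ absoluteness of (II). No elementary submodel or reflection argument is needed.

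The point where your version becomes genuinely delicate is the reflection step. You assert that after reflecting into a countable $M$, the resulting triple $(\bbP_\alpha,\bar p_\alpha,\dot f_\alpha)$ satisfies ``$\bar p_\alpha\forces_{\bbP_\alpha}\dot f_\alpha$ is eventually different from all of $\mathcal E$'', in particular from $e_\alpha$. But elementarity alone does not give this: most of $\mathcal E$ (including possibly $e_\alpha$ itself) lies outside $M$, and for iterations $\bbP\cap M$ is not a complete subforcing of $\bbP$, so the $\bbP$-forcing relation need not transfer to the collapsed $\bbP_\alpha$. To make this work you would essentially have to invoke Lemma~\ref{l.continuous.reading} anyway to turn $\dot f$ into a continuous-function code and argue via absoluteness---at which point you have rediscovered the paper's approach with an extra layer of bookkeeping. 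The paper's decision to enumerate codes rather than triples is exactly what eliminates this difficulty.
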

\begin{proof}
By Theorem~\ref{t.sigma-1-2-implies-pi-1-1} it is enough to show that there is a $\Sigma^1_2$ such family.
We work in $\eL$ and inductively construct a sequence $\langle (h_\xi, \tilde p_\xi) \mid \xi<\omega_1\rangle$
from ${}^\omega\omega\times \bbS^\omega$, where $\bbS^\omega$ denotes the full support product of $\omega$-many copies of Sacks forcing,
so that
\[
\mathcal E = \{h_\xi \mid  \xi < \omega_1\}
\]
will be our \emph{med} family.

For this purpose, first fix a sequence $\langle f_\xi \mid \xi < \omega_1\rangle$ which enumerates the set $\mathcal F$ of all codes
\[
f\colon {}^{<\omega} ({}^{<\omega} 2) \to {}^{<\omega}\omega
\]
for continuous functions ${}^\omega({}^\omega 2) \to {}^\omega\omega$ (see Section~\ref{s.prelim}).
We may in addition ask that if $\xi < \xi' < \omega_1$, it holds that $f_\xi <_{\eL} f_{\xi'}$.

Suppose we have already defined $\langle h_\xi \mid \xi < \delta\rangle$ and that
$\{h_\xi \mid \xi < \delta\}$ is an \emph{ed} family.
Let $(h_\delta,\tilde p_\delta)$ be the $\leq_{\eL}$-least pair $(h,\tilde p)\in {}^\omega\omega\times \bbS^\omega$ such that
\begin{enumerate}[label=(\Roman*), ref=(\Roman*)]
\item\label{r.h-diff} $h$ is eventually different from $h_\xi$ for each $\xi < \delta$, and
\item\label{r.the-pair} for any $\bar x \in [\tilde p]$, $h$ is not eventually different from
$f^{*}(\bar x)$.
\end{enumerate}
It is clear by Lemma~\ref{l.main.product} that such a pair $(h,\tilde p)$ exists. This finishes the definition of $\mathcal E$.

\medskip

It is clear that $\mathcal E$ is an \emph{ed} family. 
Moreover by construction, for any continuous function $f \colon {}^\omega({}^\omega 2) \to {}^\omega\omega$ there is $\tilde p \in \bbS^\omega$ and $h \in \mathcal E$ such that
 for any $\bar x \in [\tilde p]$, $h$ is not eventually different from
$f(\bar x)$.
We now show that any family $\mathcal E$ with this property is spanning\footnote{By \emph{spanning} we mean of course that
$(\forall f\in {}^\omega\omega) (\exists h \in \mathcal E)$ $f$ is not eventually different from $h$.} in any iterated Sacks extension. The proof for products is similar but simpler and is left to the reader.

Towards a contradiction let $\lambda$ be an ordinal and $\bbP$ the countably supported iteration of Sacks forcing of length $\lambda$,
and suppose we have  a $\bbP$-name $\dot f$ and $\bar p \in \bbP$  such that
\begin{equation}\label{e.main.assumption}
\bar p \forces_{\bbP}\dot f \notin \check{\mathcal E} \text{ and }\{\dot f\}\cup\check{\mathcal E}\text{ is an \emph{ed} family}.
\end{equation}
Apply Lemma~\ref{l.continuous.reading} to find $\bar q^0\leq_{\bbP}\bar p$ together with a standard enumeration $\Sigma= \langle \sigma_k \mid k\in \alpha\rangle$ of $\supp(\bar q^0)$ and a function
\[
f\colon {}^{<\alpha}({}^{<\omega} 2) \to {}^{<\omega}\omega
\]
coding a total continuous function
\[
f^* \colon {}^\alpha({}^\omega 2) \to {}^\omega\omega
\]
such that
\begin{equation}\label{e.main.contra}
\bar q^0 \forces_{\bbP} \dot f = f^* ( e^{\bar q^0,\Sigma}(\bar s_{\dot G})).
\end{equation}
Since $f \in \mathcal F$, we can find $h \in \mathcal E$
and $\bar p \in \bbS^\omega$ such that for all $\bar x \in [\tilde p]$, $h$ is not eventually different from
$f^{*}(\bar x)$.

Let $\bar q^1$ be the condition in $\bbP$ with the same support as $\bar q^0$
obtained by `pulling back' $\tilde p(k)$ via $\dot e^{\bar q^0,\Sigma}_k$ for each $k\in \alpha$, i.e., such that
for all $k\in\alpha$
\[
\forces_{\bbP}  [\bar q^1(\sigma_k)] = (\dot e^{\bar q^0,\Sigma}_k)^{-1} [[\tilde p(k)]].
\]
or equivalently, dropping superscripts from $\dot e^{\bar q^0,\Sigma}$ from now on,
\[
\forces_{\bbP} [\bar q^1] =  \dot e^{-1} [[\tilde p]].
\]
Clearly, $\bar q^1 \leq_{\bbP} \bar q^0$.

By choice of $h$ and $\tilde p$, (see Requirement~\ref{r.the-pair} above) and by absoluteness of $\Pi^1_1$ formulas
\[
\forces_{\bbP} (\forall \bar x \in [\tilde p])\; \check h\text{ and }f^{*}(\bar x)\text{ are not eventually different}
\]
and since
\[
\bar q^1 \forces_{\bbP} \dot e(\bar s_{\dot G}) \in [\tilde p]
\]
we have by Equation~\eqref{e.main.contra}
\[
\bar q^1 \forces_{\bbP} \check h\text{ and }\dot f \text{ are not eventually different}
\]
in contradiction to our assumption.
This finishes the proof that $\bbP$ forces $\check{\mathcal E}$ to be maximal.

\medskip

The theorem will be proved once we show the following claim:
\begin{clm}\label{claim.sigma12}
$\mathcal E$ is $\Sigma^1_2$.
\end{clm}
For the proof of this claim, denote by $\mathfrak S$ the effective Polish space of sequences from ${}^\omega\omega$ of length at most $\omega$,
and by $\mathfrak B$ the effective Polish space of sequences  of length at most $\omega$ from the set of perfect subtrees of ${}^{<\omega} 2$.
We may think of $\mathfrak B$ and $\mathfrak S$ as closed subsets of ${}^\omega\omega$.

Given $x\in {}^\omega2$, let $E_x \subseteq \omega^2$ be the binary relation defined by
\[
m \mathbin{E_x} n \iff x(2^m 3^n)=0.
\]
If it is the case that $E_x$ is well-founded and extensional, we denote by $M_x$ the set and by $\pi_x$ the map such that $\pi_x\colon \langle \omega, E_x\rangle \to \langle M_x, \in\rangle$ is the unique isomorphism of  $\langle \omega, E_x\rangle $ with a transitive $\in$-model.

\medskip

We also need the following well-known fact:

\begin{fct}[see {\cite[13.8]{kanamori}}]\label{fact} If $E_x$ is well-founded and extensional
and $\phi$ is a formula (in the language of set theory) with $k$ free variables,
the following relations are arithmetical in $x$:
\begin{gather*}
\{ \left( m_1,\hdots,m_k \right) \in  \omega \times\hdots \times \omega \colon \langle M_x, \in\rangle\vDash \phi(m_1,\hdots,m_k) \},\\
\{ \left(  y, m\right) \in {}^\omega\omega \times \omega \colon \pi_x(m)=y \}.
\end{gather*}
\end{fct}
We now prove the Claim.

\noindent
\textit{Proof of Claim~\ref{claim.sigma12}.}
Define a relation $P$ on ${}^\omega 2 \times {\mathfrak S}^2 \times \mathfrak B$ as follows:
Let $P(x, \vec{f}, \vec{h},\vec{p})$ if any only if
\begin{enumerate}
\item $M_x$ is well-founded and transitive and $M_x \vDash \ZF \wedge \Vee =\eL$,
\item $\vec{f}$, $\vec{h}$ and $\vec{p}$ are sequences of the same length $\alpha \leq \omega$ and for some $\{ m_{\vec{f}}, m_{\vec{h}}, m_{\vec{p}}\} \subseteq M_x$ it holds that $\pi_x(m_{\vec{f}}) = \vec{f}$, $\pi_x(m_{\vec{h}})=\vec{h}$,  and, $\pi_x(m_{\vec{p}})=\vec{p}$,
\item $\vec{f}$ enumerates an initial segment of $\mathcal F\cap M_x$,
\item\label{superitem} The following holds in $M_x$: For each $n < \alpha$,
$(\vec{h}(n),\vec{p}(n))$ is the $\leq_{\eL}$-least pair $(h,\tilde p) \in {}^\omega\omega\times\bbS^\omega$ such that
\begin{enumerate}[label=(\alph*), ref=(\ref{superitem}\alph*)]
\item\label{r.h-diff'} $h$ is eventually different from $\vec{h}(k)$ for each $k$ such that $\vec{f}(k) <_{\eL} \vec{f}(n)$ and
\item\label{r.the-pair'} for any $\bar x \in [\tilde p]$, $h$ is not eventually different from
$\vec{f}(n)^{*}(\bar x)$.
\end{enumerate}
\end{enumerate}
Note that since $M_x \vDash \ZF$ and by Mostowski Absoluteness, \ref{r.the-pair'} above holds in $M_x$ if and only if it holds in $\Vee$.

By Fact~\ref{fact}, $P$ is a $\Pi^1_1$ predicate, and $h\in \mathcal E$ if and only if for some $(x, \vec{f}, \vec{s},\vec{p})$,
$P(x, \vec{f}, \vec{s},\vec{p})$ holds and $h = \vec{h}(n)$ for some $n <\lh(\vec{h})$.
\renewcommand{\qedsymbol}{{\tiny  Claim~\ref{claim.sigma12}}~$\Box$,~{\tiny  Theorem~\ref{t.main}}~$\Box$}
\end{proof}

%
%
%
%
%
%
%
%
%

\section{Property ned}\label{s.property.ned}

Clearly Lemma~\ref{l.dst} and Lemma~\ref{l.main.product} are closely related to the following property.
It is apparently a strengthening of the property of \emph{not adding an eventually different real.}
\begin{defi}\label{d.ned}
We say a forcing $\bbP$ has property $\ned$ if and only for every   countable set $\mathcal F_0\subseteq{}^\omega\omega$  and every $\bbP$-name $\dot f$ such that
\[
\forces_{\bbP} \dot f\in{}^\omega\omega \text{ and $\dot f$ is eventually different from every function in $\check{\mathcal F}_0$,}
\]
there is  $h \in {}^\omega\omega$ which is eventually different from every function in $\mathcal{F}_0$ and $p\in\bbP$ such that
\begin{equation*}
p\forces_{\bbP} \; \check h\text{ is not eventually different from }\dot f.
\end{equation*}
\end{defi}

\medskip

Note again that $\{h\in{}^\omega\omega\mid h$ is eventually different from every function in $\mathcal F_0\}$ is meager in ${}^\omega\omega$.
Nevertheless, it turns out that an ${}^\omega\omega$-bounding forcing has property $\ned$ if and only if it does not add an eventually different real. 
We give a proof for completeness.
\begin{prop}\label{prop.ned}
Suppose $\bbP$ is an ${}^\omega\omega$-bounding forcing, i.e.,
\begin{equation}\label{e.no.unbounded}
\forces_{\bbP} (\forall f\in{}^\omega\omega)(\exists h\in{}^\omega\omega\cap V)(\forall n\in\omega)\;f(n) \leq h(n).
\end{equation}
 Then $\bbP$ does not add an eventually different real, i.e.,  
\begin{equation}\label{e.P.adds.no.ev.diff}
\forces_{\bbP} (\forall f\in{}^\omega\omega)(\exists h\in{}^\omega\omega\cap V)\;\text{$f$ is not eventually different from $h$.}
\end{equation}
\emph{if and only if} $\bbP$ has property $\ned$.
\end{prop}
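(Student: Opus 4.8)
The plan is to prove both directions of the equivalence. One direction is essentially trivial and the other is the substantive one. The statement to prove is: for an ${}^\omega\omega$-bounding forcing $\bbP$, ``$\bbP$ does not add an eventually different real'' is equivalent to ``$\bbP$ has property $\ned$''.

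For the backward direction (property $\ned$ implies no eventually different real), I would simply specialize the definition of property $\ned$. Given any $\bbP$-name $\dot f$ with $\forces_\bbP \dot f \in {}^\omega\omega$, take $\mathcal F_0 = \emptyset$. Then the hypothesis of property $\ned$ (that $\dot f$ is eventually different from every function in the empty family) is vacuously satisfied, and property $\ned$ yields an $h$ and a condition $p$ with $p \forces_\bbP \check h$ not eventually different from $\dot f$. This does not immediately give that \emph{every} condition has an extension catching $\dot f$, but a standard density argument repairs this: below any condition $q$ one applies property $\ned$ to the restricted forcing $\bbP \restriction q$ (or rephrases the argument using names), producing a ground model $h$ caught by $\dot f$ below some extension of $q$. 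Since the set of conditions forcing ``$\exists h \in V$ not eventually different from $\dot f$'' is then dense, we obtain \eqref{e.P.adds.no.ev.diff}. Here no use of ${}^\omega\omega$-bounding is needed.

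For the forward direction (no eventually different real, plus ${}^\omega\omega$-bounding, implies property $\ned$) — this is the main obstacle and where the hypotheses genuinely enter. Fix a countable $\mathcal F_0 = \{f_k \mid k \in \omega\}$ and a name $\dot f$ forced to be in ${}^\omega\omega$ and eventually different from every $f_k$. By ``no eventually different real'', there is a ground model $h_0$ and a condition $p$ with $p \forces_\bbP \check h_0$ not eventually different from $\dot f$, i.e., $p$ forces $\{n \mid \dot f(n) = h_0(n)\}$ to be infinite. The trouble is that $h_0$ need not be eventually different from the $f_k$; the target $h$ must simultaneously catch $\dot f$ \emph{and} avoid all of $\mathcal F_0$, and the set of admissible $h$ is meager. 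The idea is to modify $h_0$ on a carefully chosen infinite set of coordinates so that at those coordinates it agrees with $\dot f$ (preserving the catching), while at all coordinates it is redirected to avoid $\mathcal F_0$. This is where ${}^\omega\omega$-bounding is used: since $p$ forces the set $A = \{n \mid \dot f(n) = h_0(n)\}$ to be infinite, by ${}^\omega\omega$-bounding there is a ground model increasing function bounding the enumeration of $A$ (or bounding $\dot f$ itself), which lets me find, in the ground model, an infinite set $B$ and a condition forcing $\dot f \restriction B = h_0 \restriction B$ with $\dot f \restriction B$ taking only finitely many values below each point — concretely, one reads off a ground-model ``candidate value function'' for $\dot f$ on a dense set of coordinates.

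Concretely, I would proceed as follows. Strengthening $p$ and using ${}^\omega\omega$-bounding, obtain a ground-model strictly increasing sequence $\langle n_i \mid i \in \omega\rangle$ and ground-model finite sets $V_i \subseteq \omega$ such that $p \forces_\bbP \dot f(n_i) \in \check V_i$ for each $i$, and such that $p$ forces $\dot f(n_i) = h_0(n_i)$ for infinitely many $i$. Now define $h$ by setting, at each coordinate $n_i$, a value $h(n_i) \in V_i$ equal to $h_0(n_i)$ whenever possible, and at every coordinate choosing the value so as to differ from $f_0(n), \dots, f_{k(n)}(n)$ where $k(n) \to \infty$; since each $V_i$ may be small one must instead argue at the genuinely agreeing coordinates that $\dot f(n_i) = h_0(n_i)$ already differs from $f_j(n_i)$ for all $j \leq i$ (because $\dot f$ is forced eventually different from each $f_j$, so for $i$ large the value $\dot f(n_i)$ avoids $f_j(n_i)$). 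Thus by passing to a cofinite subsequence I may take $h := h_0$ on the relevant agreeing coordinates and freely redefine $h$ elsewhere to be eventually different from all of $\mathcal F_0$; the agreement set is preserved because it is a subset of the coordinates where $h = h_0$. The condition $p$ then still forces $\check h$ not eventually different from $\dot f$, and $h$ is eventually different from every function in $\mathcal F_0$, as required.

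The crux, and the step I expect to be the main obstacle, is the simultaneous achievement of \emph{catching} $\dot f$ and \emph{avoiding} $\mathcal F_0$: one must verify that on the infinite set of coordinates where $h_0$ is forced to agree with $\dot f$, the common value already avoids $\mathcal F_0$ eventually. This follows from the assumption that $\dot f$ is forced eventually different from each $f_k \in \mathcal F_0$, combined with a diagonalization over the countably many $f_k$ — and it is precisely ${}^\omega\omega$-bounding that allows the relevant ``eventually different'' quantifiers, which live in the extension, to be captured by a single ground-model bound, making the ground-model construction of $h$ possible.
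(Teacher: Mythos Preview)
Your proposal is correct and follows essentially the same approach as the paper: for the nontrivial direction, take a ground-model $h_0$ catching $\dot f$, use ${}^\omega\omega$-bounding to dominate the sequence of ``eventually different'' thresholds $m_k$ (where $m_k$ witnesses that $\dot f$ avoids $f_0,\dots,f_k$ from $m_k$ on) by a ground-model function $g^*$, and then define $h$ to agree with $h_0$ wherever $h_0(n)$ already avoids $\{f_0(n),\dots,f_k(n)\}$ for the $k$ determined by which $g^*$-block $n$ lies in, redirecting elsewhere.

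One point of your exposition deserves tightening: you write ``take $h := h_0$ on the relevant agreeing coordinates and freely redefine $h$ elsewhere,'' but the agreement set $N=\{n\mid \dot f(n)=h_0(n)\}$ lives in the extension and cannot be used to define $h\in V$. The paper's (and your final paragraph's) actual mechanism is to define $h(n)$ purely from the \emph{ground-model} criterion ``does $h_0(n)$ avoid $\{f_0(n),\dots,f_k(n)\}$?'' and then argue \emph{a posteriori} that for every $n\in N$ this criterion is met---because on $N$ we have $h_0(n)=\dot f(n)$ and $n\geq g^*(k)\geq m_k$. The detour through finite value-sets $V_i$ is unnecessary and you rightly abandon it.
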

\begin{proof}
One direction (``$\Leftarrow$'') is trivial (just set $\mathcal F_0=\emptyset$) and does not make use of \eqref{e.no.unbounded}. 

For the other direction suppose $G$ is $(V,\bbP)$-generic and working in $V[G]$, let $f \in {}^\omega\omega$ which is eventually different from every function in $\mathcal F_0$ be given.
We must find $h \in V\cap {}^\omega\omega$ which is eventually different from every function in $\mathcal F_0$ but not from $f$. 

Let $\mathcal F_0$ be enumerated as 
$\langle f_k\mid k\in\omega\rangle$ (perhaps with repetitions) and define $\langle m_k\mid k\in\omega\rangle$ by recursion as follows:
\[
m_k = \min\big\{m\in\omega\setminus \{m_l \mid l< k\} \mid(\forall n \in\omega\setminus m)\;f(n) \notin \{f_0(n),\hdots, f_k(n)\}\big\}.
\]
By assumption we can find $h^* \in {}^\omega\omega\cap V$ such that
and $h^*$ is not eventually different from $f$.
Thus letting
\[
N=\{n\in\omega\mid h^*(n)=f(n)\} 
\]
we obtain an infinite set.
Also by assumption, find $g^* \in {}^\omega\omega\cap V$ such that
\begin{equation*}\label{e.above}
(\forall n\in\omega)\;g^*(n) \geq m_k.
\end{equation*}
We may assume that $g^*$ is increasing (otherwise replace it by a faster growing function). 

\medskip

Work in $V$ until further notice.
We define $h\in{}^\omega\omega$ as follows: 
Given $n \in\omega$ fix $k\in\omega$ such that $n\in[g^*(k),g^*(k+1))$ and let
\[
h(n) = \begin{cases}
h^*(n) & \text{ if $h^*(n) \notin \{f_0(n),\hdots, f_k(n)\}$;}\\
\min\big(\omega\setminus\{f_0(n),\hdots, f_k(n)\}\big) & \text{ otherwise.}
\end{cases}
\]
Clearly $h$ is eventually different from each function in $\mathcal F_0$.
It remains to show that in $V[G]$, $f$ is not eventually different from $h$.
Given $n\in N$, fix $k\in\omega$ such that $n\in[g^*(k),g^*(k+1))$.
By choice of $g^*$, $n\geq g(k)$ and so $f(n) \notin \{f_0(n),\hdots, f_k(n)\}$.
As 
\begin{equation*}\label{e.e}
f(n) = h^*(n)
\end{equation*}
(because $n\in N$)  also $h^*(n) \notin \{f_0(n),\hdots, f_k(n)\}$, whence by construction 
\[
h(n)=h^*(n).
\] 
We have shown that $f\res N = h\res N$, whence $f$ is not eventually different from $h$.
\end{proof}

Given the previous proposition, the reader will not be surprised by the following general fact: 
\begin{prop}\label{prop.iter}
For ${}^\omega\omega$-bounding Suslin forcing, property $\ned$ is preserved under countable support iterations. 
\end{prop}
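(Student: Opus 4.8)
The plan is to prove Proposition~\ref{prop.iter} by combining Proposition~\ref{prop.ned} with a preservation theorem for countable support iterations of Suslin forcing. Since the class of ${}^\omega\omega$-bounding forcings is preserved under countable support iteration (this is a classical theorem of Shelah for proper forcing, and definable $\omega^\omega$-bounding is even better-behaved for Suslin forcing), and since Proposition~\ref{prop.ned} tells us that for $\omega^\omega$-bounding forcing property $\ned$ is equivalent to not adding an eventually different real, the whole proposition reduces to showing that a countable support iteration of ${}^\omega\omega$-bounding Suslin forcings does not add an eventually different real. Thus I would first reduce, invoking Proposition~\ref{prop.ned}, to the statement: \emph{if each iterand is ${}^\omega\omega$-bounding Suslin and does not add an eventually different real, then neither does the iteration.}

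For the reduction to go through cleanly I would verify two preservation facts in tandem, since property $\ned$ and ${}^\omega\omega$-bounding must both survive the iteration for Proposition~\ref{prop.ned} to apply at limit stages. First, ${}^\omega\omega$-boundingness is preserved under countable support iterations of proper forcing; Suslin forcing notions are proper (indeed, Suslin ccc or Suslin proper forcings are well-behaved under iteration), so this direction is standard. Second, ``not adding an eventually different real'' must be preserved. This is where the iteration theory does the real work. The cleanest route is to cast ``not adding an eventually different real'' as a \emph{preservation property} in the sense of Shelah's framework for countable support iterations, governed by the relation $R \subseteq {}^\omega\omega \times {}^\omega\omega$ where $f \mathbin{R} h$ holds if $f$ is not eventually different from $h$ (equivalently $\{n : f(n)=h(n)\}$ is infinite). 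The goal statement, that no new real is eventually different from all ground model reals, is exactly the statement that the generic extension adds no real $f$ with $\neg(f \mathbin{R} h)$ for all ground-model $h$; that is, the family of ground-model reals remains $R$-dominating in the relevant sense.

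The key steps, in order, are then: (i) fix the preservation relation and formulate the property $\mathbf{P}$ ``ground-model reals remain a non-meager/covering family for $R$'' so that it coincides with ``does not add an eventually different real''; (ii) invoke the Suslin-ness of the iterands to obtain the requisite absoluteness and Borel-definability of $R$ and of the forcing relation, which is what makes the preservation theorem applicable to the iteration (as opposed to an arbitrary proper iteration, where such preservation can fail); (iii) run the general preservation theorem for countable support iterations over successor and limit stages, the successor step being essentially the single-step hypothesis together with ${}^\omega\omega$-boundingness, and the limit step being the genuinely nontrivial inductive argument; (iv) conclude via Proposition~\ref{prop.ned} that property $\ned$ holds of the full iteration. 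Throughout I would lean on the fact that ${}^\omega\omega$-boundingness simplifies matters: a candidate eventually different real in the extension is dominated by a ground-model function, so one only needs to search through countably many ground-model possibilities at each coordinate, which is precisely what underlies the construction in the proof of Proposition~\ref{prop.ned} and what makes the preservation amenable.

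The main obstacle I expect is the limit step of the preservation theorem, specifically verifying that ``not adding an eventually different real'' is a property that is genuinely preserved at limits of countable cofinality rather than merely at successors. A single step not adding such a real does not automatically imply the iteration avoids it; one needs the preservation machinery (fusion together with a suitable ``$(f,g)$-preserving'' or tools-preservation condition) to show that a real appearing at a limit stage, being eventually different from all earlier reals, already contradicts the inductive hypothesis at some intermediate stage. Making this precise requires the Suslin definability hypothesis to secure enough absoluteness of the relation $R$ and of names across the iteration, and care is needed to ensure the ${}^\omega\omega$-bounding hypothesis is maintained at each stage so that Proposition~\ref{prop.ned} can be reapplied. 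I would therefore structure the limit argument around a fusion-style construction mirroring that of Lemma~\ref{l.main.product}, building a condition that forces the candidate real to agree infinitely often with an explicitly constructed ground-model function, thereby deriving the contradiction.
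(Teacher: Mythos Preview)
Your reduction via Proposition~\ref{prop.ned} to ``the iteration does not add an eventually different real, and remains ${}^\omega\omega$-bounding'' is exactly what the paper does. Where you diverge is in how you handle the preservation of ``not adding an eventually different real.'' You propose to set this up directly as a Shelah-style preservation property for the relation $R$ (infinite agreement) and, if necessary, to run a fusion argument by hand at limit stages. The paper instead routes through two known equivalences: first, that not adding an eventually different real is equivalent to not forcing ${}^\omega\omega\cap V$ to be meager; second, that for ${}^\omega\omega$-bounding Suslin forcing this in turn is equivalent to \emph{preserving non-meagerness}. It then simply cites the standard theorem that preservation of non-meagerness is iterable for ${}^\omega\omega$-bounding proper forcing under countable support. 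So the paper never touches the fusion machinery directly; it offloads the limit step entirely to the literature by passing to a property (non-meagerness preservation) for which the iteration theorem is already on the shelf. Your route would work, but it does more labor than needed---the non-meagerness reformulation is the shortcut you are missing, and it also clarifies exactly where the Suslin hypothesis is used (to upgrade ``does not make the ground-model reals meager'' to ``preserves non-meager sets'').
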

Recall here that a forcing $\bbP$ is said to be Suslin if and only if it has a presentation $\langle \bbP', \leq_{\bbP'}\rangle$ such that $\bbP' \subseteq {}^\omega\omega$, $\leq_{\bbP'} \subseteq ({}^\omega\omega)^2$ and $\bbP'$, $\leq_{\bbP'}$, and the incompatibility relation are all $\mathbf{\Sigma^1_1}$. For the definition of \emph{proper} forcing see, e.g., \cite{handbook_proper}.

\begin{proof}[Proof of Proposition~\ref{prop.iter}.]
We have seen in Proposition~\ref{prop.ned} that for ${}^\omega\omega$-bounding forcing, property $\ned$ is equivalent to not adding an eventually different real.
The proof of \cite[2.4.8,~p.~59]{barto} shows that any forcing has the latter property if and only if it does not force that ${}^\omega\omega\cap V$ is meager.
 
Furthermore, these equivalent properties are known to be preserved under countable support iterations for ${}^\omega\omega$-bounding Suslin forcings:
For this class of forcings, by \cite[2.2]{preserving} not forcing ${}^\omega\omega\cap V$ to be meager is equivalent to preserving non-meagerness, which for all ${}^\omega\omega$-bounding proper forcings is preserved under countable support iterations by \cite[6.3.20~and~6.3.21]{barto}.
\end{proof}

While this implies that countably supported iterations of Sacks forcing have property ned (since single Sacks forcing has it by Lemma~\ref{l.sacks-ev-diff} and Proposition~\ref{prop.ned}), more is true:
\begin{prop}\label{prop.general-case}
Any countable support iteration or product  (of any length)  of Sacks forcing has property $\ned$.
\end{prop}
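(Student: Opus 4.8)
The plan is to prove Proposition~\ref{prop.general-case} by reducing the two cases (iteration and product) to the machinery already developed in Section~\ref{s.sacks}. Recall that property $\ned$ requires, for a countable $\mathcal F_0 \subseteq {}^\omega\omega$ and a $\bbP$-name $\dot f$ forced to be eventually different from every member of $\mathcal F_0$, the existence of a ground-model $h$ eventually different from every function in $\mathcal F_0$ together with a condition $p$ forcing that $h$ is not eventually different from $\dot f$. My strategy is to convert the abstract name $\dot f$ into a continuously read code and then feed that code into Lemma~\ref{l.main.product}, which already produces exactly the desired $h$ and a witnessing condition.

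First I would treat the product case, since Lemma~\ref{l.main.product} is stated for $\bbS^\alpha$. Given $\mathcal F_0$ and a name $\dot f$ together with a condition $\bar p$ forcing $\dot f \in {}^\omega\omega$ and eventually different from each $f\in\mathcal F_0$, apply Lemma~\ref{l.continuous.reading} (in its product form) to obtain $\bar q \leq \bar p$, a standard enumeration $\Sigma$ of $\supp(\bar q)$ of some length $\alpha\leq\omega$, and a code $f$ for a continuous map $f^*\colon {}^\alpha({}^\omega 2)\to{}^\omega\omega$ with
\[
\bar q \forces_{\bbP} \dot f = (f^*\circ \dot e)(\bar s_{\dot G}\res\supp(\bar q)).
\]
Since $\bar q$ forces $\dot f$ eventually different from each member of $\mathcal F_0$ and $\dot e$ is forced to be a bijection onto ${}^\alpha({}^\omega 2)$, the hypothesis of Lemma~\ref{l.main.product} is satisfied, namely $\forces_{\bbS^\alpha} f^*(\bar s_{\dot G})$ is eventually different from every function in $\mathcal F_0$. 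Lemma~\ref{l.main.product} then yields $h\in{}^\omega\omega$ eventually different from every function in $\mathcal F_0$ and $\tilde p\in\bbS^\alpha$ such that $h$ is not eventually different from $f^*(\bar x)$ for every $\bar x\in[\tilde p]$. Finally, pull $\tilde p$ back through $\dot e$ exactly as in the proof of Theorem~\ref{t.main}: let $\bar q^1 \leq \bar q$ have the same support and satisfy $\forces_{\bbP} [\bar q^1] = \dot e^{-1}[[\tilde p]]$. Then $\bar q^1\forces \dot e(\bar s_{\dot G})\in[\tilde p]$, so by the reading equation $\bar q^1$ forces $h$ not eventually different from $\dot f$, witnessing property $\ned$.

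For the iteration case the argument is formally identical: Lemma~\ref{l.continuous.reading} applies verbatim to iterations, producing the same data $\bar q$, $\Sigma$, $f^*$, and the remark preceding Lemma~\ref{l.main.product} notes that essentially the same proof yields the analogous statement for iterations. I would either invoke that remark directly or observe that the pull-back construction of $\bar q^1$ in Theorem~\ref{t.main} is carried out in the iteration setting there, so the identical steps apply. The key point in both cases is that the continuous reading places the whole problem inside a single product/iteration of Sacks forcing where the combinatorial core—building $h$ and the conditions—has already been done.

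The main obstacle, and the place requiring care, is ensuring that the hypothesis of Lemma~\ref{l.main.product} genuinely transfers: one must check that being eventually different from each $f\in\mathcal F_0$ is preserved when passing from the name $\dot f$ (forced by $\bar q$) to the global statement $\forces_{\bbS^\alpha} f^*(\bar s_{\dot G})$ is eventually different from every $f\in\mathcal F_0$. This requires that $\bar q$ forces the statement for \emph{every} branch, not merely generically below $\bar q$; since $\dot e$ is forced to be onto ${}^\alpha({}^\omega 2)$ and $f$ codes a total continuous function whose value on each branch is read off by the fusion, the eventual-difference property holds on a comeager (indeed all) set of branches, and a density/genericity argument upgrades "$\bar q$ forces eventually different" to the absolute statement needed. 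I expect this to be the only genuinely non-routine verification; everything else is a direct assembly of Lemmas~\ref{l.continuous.reading} and~\ref{l.main.product} together with the pull-back already used in Theorem~\ref{t.main}.
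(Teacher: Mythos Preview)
Your proposal is correct and follows essentially the same route as the paper: apply Lemma~\ref{l.continuous.reading} to obtain $\bar q$, $\Sigma$, and the code $f$; invoke (the proof of) Lemma~\ref{l.main.product} to produce $h$ and $\tilde p\in\bbS^\alpha$; then pull $\tilde p$ back through $\dot e^{\bar q,\Sigma}$ exactly as in Theorem~\ref{t.main} and conclude by $\mathbf{\Pi}^1_1$ absoluteness. You are in fact slightly more careful than the paper, which does not explicitly address the hypothesis-transfer issue you flag; your observation that below $\bar q$ the forcing restricted to $\supp(\bar q)$ is (via $\dot e$) isomorphic to $\bbS^\alpha$ is the right way to see that the hypothesis of Lemma~\ref{l.main.product} is met.
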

We want to point out that using Lemma~\ref{l.continuous.reading} (continuous reading of names), one can easily turn the proof of Lemma~\ref{l.main.product} into an elementary and direct proof of Proposition~\ref{prop.general-case} in full generality (for iterations \emph{and} products).
\begin{proof}[Proof of Proposition~\ref{prop.general-case}]
Suppose $\bbP$ is a countably supported iteration or product of Sacks forcing. Let a countable set $\mathcal F_0\subseteq {}^\omega\omega$ and a $\bbP$-name $\dot f$ as in Proposition~\ref{prop.general-case} be given.
Just as in the proof of Theorem~\ref{t.main} below, use Lemma~\ref{l.continuous.reading} to find $\bar q \in \bbP$ together with a standard enumeration $\Sigma$ of length $\alpha\leq\omega$ of $\supp(\bar q)$ and a function
$
f\colon {}^{<\alpha}({}^{<\omega} 2) \to {}^{<\omega}\omega
$
coding a total continuous function
$
f^* \colon {}^\alpha({}^\omega 2) \to {}^\omega\omega
$
such that
$
\bar q \forces_{\bbP} \dot f = f^* (e^{\bar q^0,\Sigma}(\bar s_{\dot G})).
$

As in the proof of Lemma~\ref{l.continuous.reading}, we can find $h \in {}^\omega\omega$ eventually different from every function in $\mathcal F_0$ and $\tilde p \in \bbS^\alpha$ such that 
for any $x \in [\tilde p]$, $f^*(x)$ and $h$ are not eventually different.
Argue exactly as below in the proof of Theorem~\ref{t.main} that using $e^{\bar q^0,\Sigma}$ we can obtain a condition $\bar p \in \bbP$ from $\tilde p$ such that by $\mathbf{\Pi}^1_1$ absoluteness, $\bar p \forces_{\bbP} \check h$ is not eventually different from $\dot f$.
\end{proof}

\section{Open Questions}\label{s.open}

We close with some open questions. 
Here, for a pointclass $\Gamma\subseteq{}^\omega\omega$, $\mathfrak{a}^{\Gamma}_g$ means the minimum size of a maximal cofinitary group in $\Gamma$, $\mathfrak{a}^{\Gamma}_p$ means the minimum size in the Polish space $S_\infty$ (i.e., the space of permutations of $\omega$) of a maximal  eventually different family  in $\Gamma$, and $\mathfrak{a}^{\Gamma}$ means the minimum size of a \emph{mad} family in $\Gamma$. 
When $\Gamma$ is omitted of course it is taken to be $\Power(\omega)$.
See \cite{blass} or \cite{barto} for definitions of the cardinal invariants $\mathfrak{s}$ and $\operatorname{\mathbf{non}}(\mathcal M)$ mentioned below.

It is known that consistently, $\mathfrak{a} < \mathfrak{a}_g, \mathfrak{a}_p$.
Indeed, since $\mathbf{non}(\mathcal M) \leq \mathfrak{a}_g, \mathfrak{a}_p$ and $\mathfrak{s}\leq\mathbf{non}(\mathcal M)$, this holds in any model of $\mathfrak{a} < \mathfrak{s}$ (see for example \cite{brendle_fischer_2011}).
\begin{enumerate}
\item What strict relations are consistent between $\mathfrak{a}_g$, $\mathfrak{a}_p$ and $\mathfrak{a}$?
\item What about $\mathfrak{a}^{\mathbf{\Pi}^1_1}_g$, $\mathfrak{a}^{\mathbf{\Pi}^1_1}_p$, and $\mathfrak{a}^{\mathbf{\Pi}^1_1}$?
\end{enumerate}
Finally, we ask if property $\ned$ is distinct from ``not adding an eventually different real'' in general, i.e., for forcings which are not ${}^\omega\omega$-bounding.

\bibliographystyle{amsplain}
\bibliography{eventually_different}

\end{document}